\newtheorem{theorem}{Theorem}
\newtheorem{lemma}[theorem]{Lemma}
\newtheorem{conjecture}[theorem]{Conjecture}
\newcommand{\bE}{\mathbb{E}}
\newcommand{\bP}{\mathbb{P}}
\newcommand{\sci}{\chi_s'} %strong chrom ind
\newcommand{\bigO}{\ensuremath{O}}
\newcommand{\sdeg}{d^\textsf{s}}
\newcommand{\sne}{N^\textsf{s}}
\newcommand{\sm}{\setminus}
\newcommand{\med}{\text{\rm med}}
\newcommand{\xom}{\Omega^*} % exceptional set
\newcommand{\C}{C}
\title{A stronger bound for the strong chromatic index}
\author{Henning Bruhn and Felix Joos}
\date{}
\begin{document}

\maketitle

\vspace{-1cm}

\begin{abstract}
We prove $\sci(G)\leq 1.93 \Delta(G)^2$ for graphs of sufficiently large maximum degree where $\sci(G)$
is the strong chromatic index of $G$.
This improves an old bound of Molloy and Reed.
As a by-product, 
we present a Talagrand-type inequality where it is allowed to exclude unlikely bad outcomes 
that would otherwise render the inequality unusable.
\end{abstract}

%{\small \textbf{Keywords:} }\\
%\indent {\small \textbf{AMS subject classification:} } % Paths and cycles

\section{Introduction}

Edge colorings are well understood, for strong edge colorings this is much less the case. 
An edge coloring can be viewed as a partition of the edge set of a graph $G$
into matchings; the smallest such number of partition classes is 
the \emph{chromatic index} of $G$. If we consider the natural stronger notion of a partition
into \emph{induced} (or \emph{strong}) matchings, we arrive at the \emph{strong 
chromatic index $\sci(G)$} of $G$, 
%if the number of partition classes is minimized. 
the minimal number of induced matchings needed.

The classic result of Vizing, and independently Gupta, constrains the chromatic 
index of a (simple) graph $G$ to a narrow range: it is either equal to the trivial lower bound of the 
maximum degree $\Delta(G)$, or one more than that.
The strong chromatic index, in contrast, can vary much more. 
The trivial lower bound and 
a straightforward greedy argument give a range of 
$\Delta(G)\leq\sci(G)\leq 2\Delta(G)^2-2\Delta(G)+1$ for all graphs~$G$.
Erd\H os and Ne\v{s}et\v{r}il~\cite{FSGT89} conjectured 
a much stricter upper bound:

\newtheorem*{strongconj}{Strong edge coloring conjecture}

\begin{strongconj}%[Erd\H os and Ne\v{s}et\v{r}il~\cite{FSGT89}]\label{ENconj} 
$\sci(G)\leq \frac{5}{4}\Delta(G)^2$ for all graphs $G$.
\end{strongconj}
If true, the conjecture would be optimal,  
because any blow-up of the $5$-cycle as in Figure~\ref{fig: blow up C5}
 attains equality.
For odd maximum degree, 
Erd\H os and Ne\v{s}et\v{r}il conjectured 
that 
$\sci(G)\leq \tfrac{5}{4}\Delta(G)^2-\tfrac{1}{2}\Delta(G)+1$, which again would be tight.

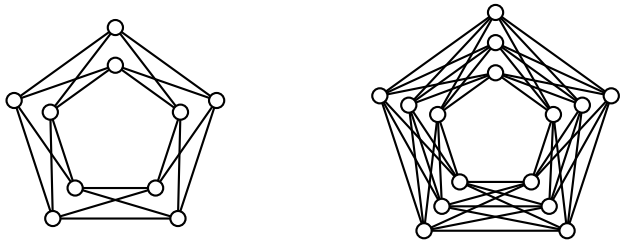
\begin{figure}[t]
\centering
\begin{tikzpicture}[every edge quotes/.style={}]
\tikzstyle{hvertex}=[thick,circle,inner sep=0.cm, minimum size=2mm, fill=white, draw=black]
\tikzstyle{hedge}=[thick]

\def\blowup{
\def\angle{360/5}
\foreach \i in {0,1,2,3,4}{
  \foreach \j in {1,...,\sizeofk}{
     \foreach \l in {1,...,\sizeofk}{
        \draw[hedge] (\angle*\i+90:\radius+\j*\stepfactor) -- (\angle*\i+\angle+90:\radius+\l*\stepfactor);
     }
  }
}

\foreach \i in {0,1,2,3,4}{
  \foreach \j in {1,...,\sizeofk}{
    \node[hvertex] (a\i\j) at (\angle*\i+90:\radius+\j*\stepfactor) {};
  }
}
}

\def\radius{0.4cm}
\def\stepfactor{0.5cm}
\def\sizeofk{2}
\blowup

\begin{scope}[shift={(5,0)}]
\def\sizeofk{3}
\def\radius{0.4cm}
\def\stepfactor{0.4cm}
\blowup
\end{scope}

\end{tikzpicture}
\caption{Two blow-ups of the $5$-cycle}\label{fig: blow up C5}
\end{figure}

In a breakthrough article of 1997,  Molloy and Reed~\cite{MR97} 
demonstrated how probabilistic coloring methods could be used to 
beat the trivial greedy bound:
\begin{align*}%\label{eq: Molloy Reed}
	\sci(G)\leq 1.998\Delta(G)^2
\end{align*}
for graphs $G$ with $\Delta(G)$ sufficiently large.

We improve this bound:
\begin{theorem}\label{thm: str chr}
If $G$ is a graph of sufficiently large maximum degree $\Delta$,
then
\begin{align*}
	\sci(G)\leq 1.93 \Delta^2.
\end{align*}
\end{theorem}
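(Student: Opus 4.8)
\emph{Outline of the proof.} A strong edge colouring of $G$ is exactly a proper vertex colouring of the square of the line graph $H := L(G)^2$, so it suffices to prove $\chi(H) \le 1.93\Delta^2$. An easy count gives $\Delta(H) \le 2\Delta^2 - 2\Delta =: D$, so the greedy bound $\chi(H)\le D+1$ is essentially the trivial $2\Delta^2-2\Delta+1$, and our task is to beat it by a constant factor. We do this with the semi-random (``naive colouring'') method: working from a palette of about $1.93\Delta^2$ colours, we repeatedly give every still-uncoloured vertex of $H$ a uniformly random colour and then uncolour any vertex that clashes with a neighbour. The aim is that after one round every uncoloured vertex $e$ keeps noticeably more free colours than it has uncoloured neighbours, so that after a bounded number of iterations the leftover graph can be finished greedily. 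As in Molloy and Reed, the surplus of free colours is produced by the \emph{non-edges} inside the neighbourhoods of $H$: when two non-adjacent neighbours of $e$ pick, and keep, the same colour, they deprive $e$ of only one colour rather than two, so a denser neighbourhood loses more colours in expectation, and a neighbourhood with many non-edges is what we want.

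The core of the argument is a quantitative analysis of $N_H(e)$, for a vertex $e = uv$ of $H$, sharper than the one behind the $1.998$ bound. Cover $N_H(e)$ by the cliques $K_w$, one for each branch vertex $w \in (N_G(u)\cup N_G(v))\setminus\{u,v\}$, where $K_w$ collects the edges of $G$ incident with $w$; there are at most $2\Delta$ such cliques, each of size at most $\Delta$. An edge incident with $w_1$ and an edge incident with $w_2$ (for $w_1\neq w_2$) are non-adjacent in $H$ unless $w_1,w_2$ are joined in $G$, have a common neighbour, or there is an edge of $G$ between $N_G(w_1)$ and $N_G(w_2)$; hence the bulk of the non-edges of $N_H(e)$ is controlled by how spread out the branch vertices are in $G$. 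From this I would aim to establish a dichotomy with an explicit constant $\varepsilon>0$: for every $e$, \emph{either} $\deg_H(e)\le (1-\varepsilon)D$, so that there is degree-slack to exploit outright, \emph{or} $N_H(e)$ contains at least $\varepsilon D^2$ non-edges --- forced, because branch vertices of near-maximum degree can be pairwise adjacent for only a vanishing fraction of pairs, and a further count turns this into $\Omega(D^2)$ non-edges. The blow-ups of $C_5$ land squarely in the first alternative; balancing the two regimes over all intermediate configurations is what pins down the number $1.93$, and squeezing $\varepsilon$ here is the real work.

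Fed into one round of the naive procedure, a first-moment computation now shows that, with a palette of size $1.93\Delta^2$, the expected number of colours free at $e$ exceeds $(1+\delta)$ times the expected number of its uncoloured neighbours, for a fixed $\delta>0$. To make this hold almost surely for all $e$ at once we combine concentration with the Lov\'asz Local Lemma. The obstacle is that ordinary Talagrand's inequality does not apply directly: the number of free colours at $e$ is Lipschitz in the colour choices but not cheaply certifiable --- certifying that a colour is free at $e$ seems to require inspecting all of $N_H(e)$ --- and the natural surrogate random variables behave well only off a small-probability event, for instance that no branch vertex carries an abnormally large monochromatic set of incident edges. This is exactly what the Talagrand-type inequality announced in the abstract is built for: it yields Talagrand-strength concentration after conditioning on a good event $\xom$, paying only an additive $\bP(\Omega\sm\xom)$, which here is superpolynomially small and disappears in the local-lemma bookkeeping.

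After one successful round the uncoloured part of $H$ has strictly smaller maximum degree while still enjoying a palette larger than $(1+\delta)$ times that degree; iterating a bounded number of times drives it down to where a greedy completion needs no new colours, giving $\chi(H)\le 1.93\Delta^2$ and hence Theorem~\ref{thm: str chr}. I expect the principal difficulty to be the structural step: not its qualitative shape, which is in the spirit of Molloy and Reed, but pushing the constant far enough while keeping intact the hypotheses of the colouring and concentration machinery --- with a close second being the verification that the exceptional events can be chosen simultaneously rare enough for the new inequality and sparse enough in their dependencies for the Local Lemma.
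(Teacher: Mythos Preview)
Your outline correctly identifies the overall architecture --- work in $L^2(G)$, establish neighbourhood sparsity, run naive colouring, use Talagrand-with-exceptions for concentration, and finish with the Local Lemma --- but it remains a sketch without the two specific quantitative ingredients that bring the constant down to $1.93$, and in a couple of places it diverges from the paper in ways that would complicate rather than help.

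First, the structural step. The paper does \emph{not} use a dichotomy. It proves a single uniform bound (Lemma~\ref{density neighborhood}): for every edge $e$, the strong neighbourhood $N^{\textsf s}_e$ spans at most $\tfrac{3}{2}\Delta^4 + 5\Delta^3 = (\tfrac{3}{4}+o(1))\binom{2\Delta^2}{2}$ edges in $L^2(G)$. This sharpens Molloy--Reed's $\tfrac{35}{36}$ to $\tfrac{3}{4}$ and is asymptotically tight (a Hadamard-code construction meets it). The proof parametrises triangles through $e$, edges inside $X = N(u)\cup N(v)$, the quantity $\sum_{y\in Y}d_X(y)(\Delta-d_X(y))$, and $4$-cycles between $X$ and $Y$, and combines these via Cauchy--Schwarz. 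Your clique-cover heuristic is plausible but you do not carry it through to an explicit $\varepsilon$, and a dichotomy would force separate treatment of the low-degree case in the colouring step, whereas the uniform bound feeds directly into a clean sparse-neighbourhood colouring lemma with $r=2\Delta^2$, $\delta=0.24$.

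Second, the colouring step. The paper runs \emph{one} round and then finishes greedily; it does not iterate. The gain over Molloy--Reed comes from two refinements you do not mention: (i) a less wasteful conflict resolution --- each edge receives a random orientation and only the vertex the edge points to is uncoloured --- and (ii) an inclusion--exclusion count that lower-bounds the colours saved at $u$ by $P_u - T_u$, where $P_u$ counts monochromatic non-adjacent pairs and $T_u$ monochromatic non-adjacent triples in $N(u)$. Bounding $\bE[P_u]$ from below and $\bE[T_u]$ from above (the latter via the $\delta^{3/2}r^3/6$ triangle bound) with the $\tfrac34$ sparsity plugged in yields condition~\eqref{gamma condition} at $\gamma = 0.035$, hence $1.93$. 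Your generic ``free colours vs.\ uncoloured neighbours'' first-moment analysis followed by iteration is the older framework and, without the above modifications, does not by itself reach $1.93$; the paper remarks explicitly that the sharpened sparsity together with the \emph{old} colouring lemma gives only $1.99$.

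So the gap is concrete: you have the skeleton but not the two quantitative ingredients --- the $\tfrac{3}{2}\Delta^4$ density bound and the oriented-edge/$P_u{-}T_u$ colouring analysis --- that actually produce $1.93$. Your identification of the exceptional-set Talagrand inequality as the right concentration tool is on target and matches the paper.
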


A strong edge coloring of a graph $G$ 
may be viewed as an ordinary vertex coloring in the 
square $L^2(G)$ of the linegraph of $G$. (The square of any graph
is obtained by adding edges between any vertices of distance of most~$2$.)
Working in $L^2(G)$ permitted Molloy and Reed to split
the strong edge coloring problem
into two weaker sub-problems. First, they showed
that the 
neighborhood of any vertex in $L^2(G)$ is somewhat sparse. 
Second, 
based on a probabilistic coloring method, they 
proved a  coloring result for graphs with sparse neighborhoods,
that holds for general graphs, not only for  squares of  linegraphs.  

We follow these same steps
 but make a marked  improvement in each sub-problem: our sparsity result 
is asymptotically best-possible; and our coloring lemma needs fewer colors. 
We discuss the differences between our approach and that of Molloy and Reed 
in detail in Sections~\ref{sec: outline} and~\ref{sec: coloring}.

As a tool for our coloring lemma we develop in Section~\ref{sec: talagrand}
a version of Talagrand's inequality
that excludes exceptional outcomes. 
Talagrand's inequality is used to verify that random variables on product 
spaces are tightly concentrated around their expected value. 
It is  particularly suited for random variables that only change little 
when a single coordinate is modified. This will not be the case in our 
application: in some very rare events a single change might result 
in a very large effect. To cope with this, we formulate a version 
of Talagrand's inequality in which such large effects of tiny probability
can be ignored. We take some effort to make the application of the inequality
as simple as possible
as we have some hopes that it might be useful elsewhere.

\medskip
A weakening of the strong edge coloring conjecture 
yields a statement on \emph{strong} cliques, the cliques of 
the square of the linegraph. 
\begin{conjecture}
Any strong clique of a graph $G$ has size at most
$\omega(L^2(G))\leq \frac{5}{4}\Delta^2(G)$.
\end{conjecture}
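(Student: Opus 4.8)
The plan is to first notice that, up to the value of the constant, this statement is a \emph{weakening} of Theorem~\ref{thm: str chr}: since the clique number of any graph is at most its chromatic number, we get for free that $\omega(L^2(G))\leq\chi(L^2(G))=\sci(G)\leq 1.93\Delta^2$ once $\Delta$ is large. So the whole content lies in improving the constant from $1.93$ to $\tfrac54$, and for that I would drop the probabilistic colouring machinery entirely and argue directly about the rigid structure of a strong clique. Let $F\subseteq E(G)$ be a strong clique, i.e.\ a clique of $L^2(G)$, and suppose for contradiction that $|F|>\tfrac54\Delta^2$. If every two edges of $F$ share an endpoint, then $F$ is a star or a triangle and $|F|\leq\max(\Delta,3)$, so we may pick $e_1=a_1b_1$ and $e_2=a_2b_2$ in $F$ at distance exactly $1$ in $L^2(G)$. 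Every $f\in F$ is within distance $1$ of both $e_1$ and $e_2$, hence $f$ has an endpoint in $N[a_1]\cup N[b_1]$ and an endpoint in $N[a_2]\cup N[b_2]$.

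Classifying each $f\in F$ by which of $a_1,b_1$ it is near and which of $a_2,b_2$ it is near (at most four types), and then counting the at most $\Delta$ edges incident to a single such vertex, already yields the trivial $|F|\leq 2\Delta^2$; the real work is to recover the missing factor. The idea I would push is to choose the pair $e_1,e_2$ extremally — say so that $e_1$ and $e_2$ are at maximum distance inside $F$, or so that $|N[a_1]\cap N[a_2]|+|N[a_1]\cap N[b_2]|+\cdots$ is as small as possible — and then to use a \emph{second} time that $F$ is a clique: the edges of $F$ meeting $N[a_1]$ themselves form a strong clique, so the vertices of $N[a_1]$ that actually carry edges of $F$ cannot be ``independent'' of the corresponding vertices near $e_2$; this forces many coincidences and should let one replace one of the two factors of $\Delta$ by roughly $\tfrac14\Delta$. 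Throughout, the configuration to keep in mind is the blow-up of $C_5$ in Figure~\ref{fig: blow up C5}: there the \emph{entire} edge set is a strong clique of size exactly $\tfrac54\Delta^2$, so any charging scheme must be tight on precisely that example, and the asymptotically sharp sparsity estimate for neighbourhoods in $L^2(G)$ that the paper establishes is the natural tool for bounding the relevant overlaps.

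The main obstacle is the constant. Dropping below $2\Delta^2$ by a fixed factor is routine via the case analysis above, and with effort one can reach something like $\tfrac32\Delta^2$; but squeezing all the way down to $\tfrac54\Delta^2$, and being exactly tight on the $C_5$ blow-up, appears to require a genuinely global argument rather than a local charging scheme, and I expect the plan stalls here — which is presumably why the statement is recorded as a conjecture rather than a theorem.
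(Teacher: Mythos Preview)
You have correctly diagnosed the situation: the statement is recorded in the paper as a \emph{conjecture}, not a theorem, and the paper offers no proof of it. There is thus no ``paper's own proof'' to compare your attempt against, and your proposal is not a proof either --- as you yourself concede, the extremal-pair argument stalls well short of $\tfrac54\Delta^2$.

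What the paper \emph{does} prove is the weaker Theorem~\ref{thm: clique}, namely $\omega(L^2(G))\leq 1.74\,\Delta^2$ for $\Delta\geq 400$, and the argument there is precisely the one you allude to at the end of your middle paragraph: pick any edge $e$ in a strong clique $K$, observe that $K\setminus\{e\}\subseteq \sne_e$, and apply the sparsity bound of Lemma~\ref{density neighborhood} to obtain $\binom{|K|-1}{2}\leq \tfrac32\Delta^4+5\Delta^3$, which solves to $|K|<1.74\,\Delta^2$. This is a two-line deduction from the density lemma, with no structural analysis of the clique itself; the paper does not pursue anything like your extremal-pair-of-edges case analysis. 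Since Lemma~\ref{density neighborhood} is asymptotically tight (Section~\ref{sec: density example}), this density route alone can never go below $\sqrt{3}\,\Delta^2\approx 1.73\,\Delta^2$, so reaching $\tfrac54\Delta^2$ would indeed require a genuinely new idea --- exactly as you concluded.
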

Note that the edge set of any blow-up of the $5$-cycle is a strong clique, so that the conjecture 
would be tight. Not much is known about this seemingly easier conjecture. 
Chung, Gy{\'a}rf{\'a}s, Tuza and Trotter~\cite{CGTT90} showed that any graph $G$
that is $2K_2$-free  
has at most $\tfrac{5}{4}\Delta^2(G)$ edges. In such a graph, the whole 
edge set forms a strong clique.
%Slightly more general, 
Faudree, Schelp, Gy{\'a}rf{\'a}s and Tuza~\cite{FSGT90} found an upper 
bound of $(2-\epsilon)\Delta^2(G)$ for the size of any strong clique, for some
small $\epsilon$. Bipartite graphs are easier to handle in this respect: 
the same authors proved that the strong clique can never have size larger than $\Delta(G)^2$.
Again, this is tight, as  balanced complete bipartite graphs attain that bound.

We prove:
\begin{theorem}\label{thm: clique}
If $G$ is a graph with maximum degree $\Delta\geq 400$,
then its strong clique has size at most
$\omega(L^2(G))\leq 1.74 \Delta^2$.
\end{theorem}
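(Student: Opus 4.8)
The plan is to bound $|M|$ for an arbitrary strong clique $M$ by fixing one edge $e=uv\in M$ and analysing where the remaining edges attach to it. Since every $f\in M\setminus\{e\}$ lies within distance $1$ of $e$, it has an endpoint in $N(u)\cup N(v)$. Write $A=N(u)\setminus\{v\}$ and $B=N(v)\setminus\{u\}$, and split $M\setminus\{e\}$ into the set $M_0$ of edges meeting $\{u,v\}$, of which there are at most $2\Delta-2$, and the set $M_1$ of \emph{border} edges, each avoiding $u$ and $v$ but having an endpoint in $A\cup B$. Assigning to each $f\in M_1$ a representative endpoint in $A\cup B$ and counting, for every vertex $p\in A\cup B$, the at most $\Delta-1$ edges of $M_1$ through $p$, gives the trivial estimate $|M_1|\le|A\cup B|(\Delta-1)\le 2(\Delta-1)^2$ and hence $|M|\le 2\Delta^2-2\Delta+1$. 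The whole task is to save a constant factor on this.

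First I would dispose of the case in which $|A\cup B|$ is noticeably below $2\Delta$: choosing $e\in M$ so as to minimise $|A\cup B|$, if this minimum is at most $1.7\Delta$ then the counting above already yields $|M|\le 1.74\Delta^2$ once $\Delta\ge 400$. Hence I may assume that for \emph{every} edge of $M$ the two endpoints have nearly disjoint neighbourhoods, each of size close to $\Delta$; in particular almost every vertex of $A\cup B$ carries close to $\Delta$ border edges.

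The heart of the argument is then to exploit the strong-clique condition \emph{between} border edges. Fix two vertices $p,p'\in A\cup B$, each carrying nearly $\Delta$ edges of $M_1$, and consider two such edges $pq$ and $p'q'$ with $q\neq q'$: being within distance $1$ forces one of $pp',pq',p'q,qq'$ to be an edge of $G$. Since $p$ already spends almost all of its degree on the edges $pq\in M_1$, the option $pq'\in E(G)$ can hold for only a bounded number of $q'$ unless $q'$ is itself one of the neighbours of $p$ used by $M_1$, and symmetrically for $p'q$; so for all but a negligible fraction of pairs $(q,q')$ we are driven to $pp'\in E(G)$ or $qq'\in E(G)$. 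I would convert this into a dichotomy: either a positive proportion of the pairs from $A\cup B$ are adjacent in $G$, which bounds $|A\cup B|$ through the degree constraint and hence bounds $|M_1|$ well below $2\Delta^2$; or the far endpoints $q$ are so heavily interconnected that they are confined to a small set, which again bounds $|M_1|$. Optimising the resulting trade-off, with the explicit error terms permitted by a finite $\Delta\ge 400$, is where the constant $1.74$ emerges.

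The step I expect to be the main obstacle is the bookkeeping in this last part: border edges whose far endpoint lies itself in $A\cup B$ get counted twice, each vertex contributes a bounded but nonzero number of exceptional adjacencies, and one must keep the accumulated slack small after multiplication by $\Delta$. If the direct count turns out to be too delicate, a cleaner alternative is to invoke the neighbourhood-sparsity result developed later in the paper: since the other $m-1$ edges of $M$ form a clique inside the $L^2(G)$-neighbourhood of $e$, the inequality $\binom{m-1}{2}\le e\bigl(L^2(G)[N_{L^2(G)}(e)]\bigr)$ turns any fixed lower bound on the proportion of missing edges in that neighbourhood directly into the bound $m\le 1.74\Delta^2$.
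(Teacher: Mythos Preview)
Your primary approach---the direct dichotomy on border edges---is not a proof as it stands, and I do not see how to make it one yielding the constant $1.74$. The step where you ``optimise the resulting trade-off'' is the entire content, and you give no quantitative version of the dichotomy: you would need to show that either the density inside $A\cup B$ or the density among the far endpoints is large enough to force a saving of roughly $0.26\Delta^2$, and nothing in your sketch pins down any constant at all. Indeed, arguments of this flavour were already in the literature (Faudree, Schelp, Gy\'arf\'as and Tuza) and produced only $(2-\epsilon)\Delta^2$ for some small unspecified $\epsilon$; getting down to $1.74$ by this route, if possible, would require substantially more than what you have written.

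Your fallback alternative, on the other hand, is exactly what the paper does, and it is a two-line argument once Lemma~\ref{density neighborhood} is available. If $K$ is a strong clique of size $m$ and $e\in K$, then the other $m-1$ edges of $K$ lie in $\sne_e$ and are pairwise adjacent in $L^2(G)$, so
\[
\binom{m-1}{2}\le \tfrac{3}{2}\Delta^4+5\Delta^3,
\]
which for $\Delta\ge 400$ gives $m\le 1.74\Delta^2$. The constant $1.74$ is simply (a rounding of) $\sqrt{3}$, coming from the $\tfrac{3}{2}\Delta^4$ in the sparsity lemma; no separate optimisation is needed. So the right move is to drop the dichotomy plan entirely and lead with the sparsity bound.
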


Coming back to strong edge colorings, let us note that there are a number 
of results for special graph classes. The conjecture was verified for maximum degree~$3$
by Andersen~\cite{And92}, and independently by Horek, Qing and Trotter~\cite{HQT93}.
For $\Delta(G)=4$, Cranston~\cite{Cra06} achieves a bound of $\sci(G)\leq 22$, which 
is off by~$2$.
Mahdian~\cite{Mah00} proved that $\sci(G)\leq \frac{2\Delta(G)^2}{\log \Delta(G)}$ 
if $G$ does not contain  $4$-cycles by quite involved 
probabilistic methods.
Finally, a number of works concern degenerated graphs, the earliest of which is 
by Faudree, Schelp, Gy{\'a}rf{\'a}s and Tuza~\cite{FSGT90}
who established the bound $\sci(G)\leq 4\Delta(G)+4$ for planar graphs $G$.
Kaiser and Kang~\cite{KK14} consider a generalization of strong edge colorings, 
where alike colored edges have to be even farther apart.

The use of probabilistic methods to color graphs is explored 
 in depth in the book of Molloy and Reed~\cite{MR02}, where also many more 
references can be found. We only mention additionally the article of 
Alon, Krivelevich and Sudakov~\cite{AKS99} on coloring 
graphs with sparse neighborhoods.
However, their result only implies something nontrivial if 
the neighborhoods are much sparser than we can expect in squares of linegraphs.

Strong edge colorings seem much more difficult than edge colorings. This is 
because already induced matchings are much harder to handle than ordinary matchings. 
While the size of a largest matching can be quite precisely be determined,
it is even hard to obtain good bounds for induced matchings; see for instance~\cite{JRS14,Joo14}. 

%Denote by $\nu_s(G)$ the size of the largest induced matching in a graph $G$.
%It is known that $\nu_s(G)\geq |E(G)|/9$ for cubic graphs~\cite{JRS14}.
%For graphs with large maximum degree,
%the best lower bound for $\nu_s(G)$ in terms of $|E(G)|$ and $\Delta(G)$ is given by Theorem~\ref{thm: str chr},
%so $\nu_s(G)\geq \frac{|E(G)|}{1.93\Delta^2(G)}$.
%In terms of the number of vertices of $G$,
%the strong matching number is significantly better understood:
%$\nu_s(G)\geq \frac{|V(G)|}{\left(\lceil\frac{\Delta}{2}\rceil+1\right) \left(\lfloor\frac{\Delta}{2}\rfloor+1\right)}$
%if $G$ is a graph without isolated vertices and maximum degree $\Delta\geq 1000$ and this bound is tight~\cite{Joo14}.

\medskip
All our graphs are simple and finite.
We use standard graph-theoretic notation and concepts that can be found, for instance, 
in the book of Diestel~\cite{Die10}.

\section{Outline and proof of Theorem~\ref{thm: str chr}}\label{sec: outline}

A strong edge coloring of a graph $G$ is nothing else than an ordinary vertex
coloring in $L^2(G)$, the square of the linegraph of $G$. (The square of a graph 
is obtained by adding an edge between any two vertices of distance~$2$.) 
Molloy and Reed~\cite{MR97} use this simple observation to 
prove their bound on the strong chromatic index in two steps. 

First, they show that neighborhoods in $L^2(G)$ cannot be too dense. 
To formulate this more precisely denote by $\sne_e$
for any edge $e$ of $G$  the set of edges of distance
at most~$1$ to $e$, which is equivalent to saying that $\sne_e$ is the neighborhod of $e$ 
in  $L^2(G)$. We will often call $\sne_e$ the \emph{strong} neighborhood of $e$.
Molloy and Reed show that for every edge~$e$
\begin{equation}\label{MRsparsity}
\text{$\sne_e$  induces in $L^2(G)$ at most ${\textstyle\left(1-\frac{1}{36}\right)\binom{2\Delta^2}{2}}$
 edges,}
\end{equation}
where $\Delta$ is the maximum degree of $G$. 

In the second step, Molloy and Reed show that any graph with sparse neighborhoods, such 
as $L^2(G)$, can by colored with a probabilistic procedure. 

Following this strategy, we also prove a sparsity result and a coloring lemma. 

\begin{lemma}\label{density neighborhood}
Let $G$ be a graph of maximum degree $\Delta\geq 1$, and let $e$ be an edge of $G$.
Then the neighborhood $\sne_e$ of $e$ induces in $L^2(G)$ a graph of at most
$\frac{3}{2}\Delta^4+5\Delta^3$ edges.
\end{lemma}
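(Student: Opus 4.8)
The plan is to count, for a fixed edge $e=xy$, the pairs of edges in $\sne_e$ that are themselves at distance at most $1$ from each other in $G$, i.e.\ the edges of $L^2(G)$ induced on $\sne_e$. First I would organize $\sne_e$ by how its edges attach to $e$: every edge $f\in\sne_e$ shares a vertex with $x$ or $y$ or has an endpoint adjacent to $x$ or $y$. Concretely, write $A$ for the edges incident with $x$ or $y$ (there are at most $2\Delta-1$ of these), and for each neighbour $u$ of $x$ or $y$ let $S_u$ be the set of edges incident with $u$ but not with $x$ or $y$; then $\sne_e = A \cup \bigcup_u S_u$, where $u$ ranges over the at most $2\Delta-2$ vertices in $N(x)\cup N(y)\setminus\{x,y\}$, and $|S_u|\le \Delta-1$. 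This gives the crude bound $|\sne_e|\le 2\Delta^2$ that underlies the greedy argument, and the point is to show the induced graph is far from complete.

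The main computation is to bound the number of pairs $\{f,f'\}\subseteq\sne_e$ that are \emph{adjacent} in $L^2(G)$, i.e.\ at distance $\le 1$ in $G$. I would split this into cases by which blocks $f$ and $f'$ live in. Pairs with both edges in $A$ contribute only $O(\Delta^2)$. Pairs with one edge in $A$ and one in some $S_u$ contribute $O(\Delta^3)$, since $|A|\cdot\sum_u|S_u| = O(\Delta)\cdot O(\Delta^2)$. The dominant term comes from pairs $f\in S_u$, $f'\in S_{u'}$ with $u\ne u'$: for such a pair to be adjacent in $L^2(G)$, the edges must share a vertex or have adjacent endpoints, and since $f$ is incident with $u$, $f'$ with $u'$, and $u\ne u'$, the "interaction" is constrained — one should show that for a fixed $f\in S_u$ the number of $u'$ and $f'\in S_{u'}$ that can reach $f$ is $O(\Delta^2)$ only through a bounded "width", so that summing over all $f$ gives roughly $\tfrac32\Delta^4$ rather than the trivial $(2\Delta^2)^2/2 = 2\Delta^4$. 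The factor $\tfrac32$ should emerge from carefully accounting for the double-counting of unordered pairs and from the fact that an edge $f=uv\in S_u$ can only be adjacent in $L^2(G)$ to edges meeting $N[u]\cup N[v]$, whose total size within $\bigcup S_{u'}$ is bounded well below $2\Delta^2$ because many of those edges were already counted in $S_u$ itself or are incident with $x,y$. Finally, pairs within a single $S_u$ are all adjacent in $L^2(G)$ (they share the vertex $u$) and contribute $\sum_u\binom{|S_u|}{2}\le (2\Delta-2)\binom{\Delta-1}{2} = O(\Delta^3)$.

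I expect the hard part to be the clean bookkeeping in the dominant case — pairs $f\in S_u$, $f'\in S_{u'}$ with $u\ne u'$ — because naively this is $\binom{\sum_u|S_u|}{2}\approx 2\Delta^4$, which is too weak; one needs to exploit that not every two edges in $\bigcup_u S_u$ are within distance $1$. The saving comes from fixing an edge $f=uv$ with $u\in N(x)\cup N(y)$: the edges $f'$ adjacent to $f$ in $L^2(G)$ must be incident with a vertex of $N[u]\cup N[v]$, and among those at most one "star" $S_w$ per vertex $w\in N[u]\cup N[v]$ contributes, for a total of at most $|N[u]\cup N[v]|\cdot(\Delta-1) \le (2\Delta-1)(\Delta-1)$ candidate edges $f'$; multiplying by $|\bigcup_u S_u|\le 2\Delta^2 - O(\Delta)$ choices of $f$ and dividing by $2$ gives $\le \tfrac32\Delta^4 + O(\Delta^3)$ after one checks that a $(2\Delta-1)(\Delta-1) \le (2-c)\Delta^2$-type gain is available — or, more precisely, that the relevant product telescopes to the stated $\tfrac32\Delta^4+5\Delta^3$ once the low-order contributions from $A$ and from intra-$S_u$ pairs are folded in. I would keep all error terms explicit as $c\Delta^3$ with small integer $c$ and verify at the end that they sum to at most $5\Delta^3$, which should be comfortable with room to spare.
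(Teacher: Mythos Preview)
Your plan has a genuine gap: the ``saving'' you sketch does not actually save anything. You propose to bound, for a fixed $f=uv\in\bigcup_u S_u$, the number of $f'\in\sne_e$ adjacent to $f$ in $L^2(G)$ by $|N[u]\cup N[v]|\cdot(\Delta-1)\le(2\Delta-1)(\Delta-1)$. But $(2\Delta-1)(\Delta-1)=2\Delta^2-3\Delta+1$, which is essentially just the trivial bound $\sdeg(f)\le 2\Delta^2$ from Lemma~\ref{strdeglem}. Multiplying by $|\sne_e|\le 2\Delta^2$ and halving yields $2\Delta^4-O(\Delta^3)$, not $\tfrac32\Delta^4$. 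There is no ``$(2-c)\Delta^2$-type gain'' hiding in your per-edge count; indeed $\sdeg_{\sne_e}(f)$ can genuinely be close to $2\Delta^2$ for individual edges $f$, so no uniform per-edge bound will work.

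The real mechanism, which the paper exploits, is a trade-off that your outline never touches. Writing $X=N(x)\cup N(y)\setminus\{x,y\}$ and $Y=N(X)\setminus(X\cup\{x,y\})$, one has
\[
2m_e=\sum_{f\in\sne_e}\sdeg(f)-\sum_{f\in\sne_e}\sdeg_{\overline{\sne_e}}(f).
\]
The first sum is reduced below $2\Delta^2|\sne_e|$ by $4$-cycles in the bipartite graph between $X$ and $Y$; the second sum is bounded below by counting paths $pxyq$ with $x\in X$, $y\in Y$, $q\notin X$ (such paths witness strong neighbours of $px\in\sne_e$ lying outside $\sne_e$). These two effects trade off against each other via the parameter $\gamma\Delta^3=\sum_{y\in Y}d_X(y)(\Delta-d_X(y))$: if the $X$--$Y$ graph is ``concentrated'' (few $y$'s, each with high $d_X(y)$) you get many $4$-cycles by Cauchy--Schwarz; if it is ``spread out'' you get many escaping paths. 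The $\tfrac32$ emerges only after optimising over this trade-off, and the example in Section~\ref{sec: density example} shows it is tight --- so any argument that skips the $4$-cycle/escaping-path dichotomy cannot reach $\tfrac32\Delta^4$.
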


\begin{lemma}\label{coloring lemma}
Let $\gamma,\delta\in (0,1)$ be so that 
\begin{align}\label{gamma condition}
	\gamma < \frac{\delta}{2(1-\gamma)}e^{- \frac{1}{1- \gamma}}- \frac{\delta^{3/2}}{6(1-\gamma)^2}e^{- \frac{7}{8(1-\gamma)}}.
\end{align}
Then, there is an integer $R$ so that for all $r\geq R$ it follows 
$\chi(G)\leq (1-\gamma) r$ for every 
  graph $G$ with maximum degree at most $r$ in which,  for every vertex $v$, the neighborhoods
$N(v)$ induce graphs of at most $(1-\delta)\binom{r}{2}$ edges.
\end{lemma}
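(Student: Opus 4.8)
The plan is to use the semi-random (\emph{naive colouring}) method in the style of Molloy and Reed. Fix a graph $G$ with maximum degree at most $r$ in which every neighbourhood $N(v)$ spans at most $(1-\delta)\binom r2$ edges, and assign to each vertex a colour chosen uniformly at random from a palette of size $(1-\gamma)r$, independently across vertices; then uncolour any vertex that receives the same colour as a neighbour. The point of the sparsity hypothesis is that for a vertex $v$, two of its neighbours that are themselves non-adjacent may be assigned the same colour, thereby ``wasting'' that colour on $N(v)$ and reducing the number of \emph{distinct} colours appearing on $N(v)$; this effect, accumulated over the $\delta\binom r2$ missing edges, produces a genuine surplus of available colours at $v$. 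The first main step is to compute the expected number of colours retained in $N(v)$ and show that the expected number of colours that are free at $v$ (i.e.\ appear on no retained neighbour) is at least some $\tilde\gamma r$ with $\tilde\gamma$ bounded below by roughly the right-hand side of \eqref{gamma condition}; the two terms there should correspond, respectively, to an inclusion--exclusion first-order count of colours duplicated on a non-edge of $N(v)$ (the $\frac{\delta}{2(1-\gamma)}e^{-1/(1-\gamma)}$ term) and a second-order correction (the negative $\delta^{3/2}$ term) that accounts for a colour being triply used or used on two disjoint non-edges. The factor $e^{-1/(1-\gamma)}$ is the probability that a fixed colour survives the uncolouring at a fixed vertex, and $e^{-7/(8(1-\gamma))}$ reflects a survival probability with a few extra conflicts forced.

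The second main step is concentration. I would let $X_v$ be the number of colours free at $v$ after the uncolouring round and show $X_v$ is concentrated around its mean. This is exactly where the paper's new Talagrand-type inequality is needed: changing the colour of a single vertex can in principle change $X_v$ by a lot in rare configurations (for instance if that vertex is the unique reason many colours are blocked), so the vanilla bounded-differences / Talagrand setup fails, and one wants to excise a low-probability ``bad'' event and apply Talagrand conditionally. Granting concentration, a union bound over the $O(r)$ relevant events at $v$ (and over all $v$) shows that with positive probability every vertex simultaneously has at least $(\tilde\gamma-o(1))r \ge \gamma r$ free colours after the first round, for $r$ large.

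The third step is the iteration. After the first round we have a partial proper colouring together with, at every uncoloured vertex, a list of at least $\gamma r$ permissible colours while the ``uncoloured degree'' has dropped by a constant factor in expectation. Re-running the procedure on the graph induced by the uncoloured vertices — tracking the shrinking maximum degree $r_i$ and the surviving list sizes $L_i$, and checking that the ratio $L_i / r_i$ stays bounded below — one reaches after $O(\log r)$ rounds a stage where the uncoloured graph has maximum degree small compared to the remaining lists, at which point a greedy finish completes a proper colouring. Summing the palette used over all rounds gives total colour count $(1-\gamma)r + \sum_i \gamma r_i + (\text{greedy tail}) = (1-\gamma)r + O(\gamma r) $; with care the geometric decay makes the overhead $o(r)$ relative to $(1-\gamma)r$, but more honestly one runs the whole argument with a slightly better $\gamma' > \gamma$ in the single round (the strict inequality in \eqref{gamma condition} gives exactly this slack) so that the accumulated lower-order terms are absorbed and the final bound is $(1-\gamma)r$.

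The main obstacle I anticipate is the concentration step: verifying the hypotheses of the exceptional-outcome Talagrand inequality for $X_v$ — in particular identifying the right ``bad'' event to exclude, certifying $X_v$ with a bounded number of coordinates on the good event, and controlling the Lipschitz constant there — is the delicate part, and it is presumably why the paper isolates that inequality as a reusable tool in Section~\ref{sec: talagrand}. A secondary but still fiddly point is getting the constants in \eqref{gamma condition} exactly right, i.e.\ choosing which higher-order coincidences to keep (the $\delta^{3/2}$ term) and which to bound crudely, so that the inequality is both true and strong enough to feed into Theorem~\ref{thm: str chr} via Lemma~\ref{density neighborhood}.
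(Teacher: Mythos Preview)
Your outline has the right shape but departs from the paper's proof in three respects, two of which are genuine gaps.

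\textbf{Union bound fails.} You write ``a union bound over \ldots\ (and over all $v$)''. The number of vertices of $G$ is not bounded by any function of $r$, so no failure probability depending only on $r$ survives a union bound over the whole graph. The paper uses the Lov\'asz Local Lemma instead: the bad event $A_u$ (that too few colours are saved in $N(u)$) depends only on the random choices within distance~$2$ of $u$, hence is mutually independent of all but polynomially-in-$r$ many other $A_{u'}$; this is what yields a positive probability of global success.

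\textbf{No iteration; one round then greedy.} The paper performs a \emph{single} round with the full palette $\{1,\dots,\lceil(1-\gamma)r\rceil\}$ and then completes greedily. The key observation is that if the number of colours ``saved'' in $N(u)$ is at least $\gamma r$ for every $u$, then at each uncoloured vertex the number of colours not yet appearing on its coloured neighbours exceeds its number of uncoloured neighbours, and greedy succeeds. Your iterative scheme, with colour counts summed over rounds, does not deliver the bound $(1-\gamma)r$; the strict inequality in~\eqref{gamma condition} is used only to absorb the $o(1)$ error terms in a single round, not to pay for further rounds.

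\textbf{Different uncolouring rule, hence different constants.} The paper does not uncolour both endpoints of a monochromatic edge. It assigns each edge a uniform random orientation and uncolours only the head of any monochromatic edge. This halves the uncolouring rate: a non-adjacent pair $v,w\in N(u)$ retains a common colour with probability at least $\tfrac{1}{C}(1-\tfrac{1}{2C})^{2r}\approx\tfrac{1}{(1-\gamma)r}\,e^{-1/(1-\gamma)}$, whereas with your rule it would be about $\tfrac{1}{(1-\gamma)r}\,e^{-2/(1-\gamma)}$, and~\eqref{gamma condition} would not come out. Concretely, the paper works with $P_u$ (non-adjacent same-colour surviving pairs in $N(u)$) and $T_u$ (such triples), bounds the saving by $P_u-T_u$ via inclusion--exclusion, and the $\tfrac{\delta^{3/2}}{6(1-\gamma)^2}e^{-7/(8(1-\gamma))}$ term is $\bE[T_u]$, with $\delta^{3/2}r^3/6$ the upper bound on the number of triangles in the complement of $G[N(u)]$. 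Your identification of the two terms as first- and second-order inclusion--exclusion contributions is essentially right, but the $e^{-7/(8(1-\gamma))}$ factor is the survival probability of a \emph{triple} under the orientation rule (from the $(1-\tfrac{7}{8C})$ factor at triply-common neighbours), not a generic ``few extra conflicts''.

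Your remarks on concentration are on target: the paper proves Lemma~\ref{concentration} for $P_u$ and $T_u$ via Theorem~\ref{thm: talagrand nice}, taking as exceptional set $\Omega^*$ the event that some colour appears more than $\log r$ times in $N(u)$.
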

Condition~\eqref{gamma condition} might be slightly hard to parse. Therefore, let us remark 
that in a range of $\delta\in(0,0.9]$ 
\[
\gamma=0.1827\cdot\delta-0.0778\cdot\delta^{3/2}
\]
satisfies the condition and is not too far away from the best-possible $\gamma$. 

Our main theorem is a direct consequence of the two lemmas.

\begin{proof}[Proof of Theorem~\ref{thm: str chr}]
Let $G$ be a graph with maximum degree $\Delta$ sufficiently large.
With Lemma~\ref{density neighborhood} we conclude that for every vertex $v$ of $L^2(G)$
the neighborhood induces a graph of at most $(\frac{3}{4}+o(1))\binom{2\Delta^2}{2}$ edges.
Therefore, we can apply Lemma~\ref{gamma condition} with 
$r=2\Delta^2$, $\delta=0.24$, and $\gamma=0.035$.
\end{proof}

While Molloy and Reed developed this very neat proof technique, our contribution
consists in improving its two constituent steps. In particular, 
in the sparsity lemma we improve Molloy and Reed's~$\tfrac{1}{36}$ in~\eqref{MRsparsity}
to roughly~$\tfrac{1}{4}$. This 
is almost as good as possible: in Section~\ref{sec: density example} we construct
graphs that asymptotically reach the upper density bound of Lemma~\ref{density neighborhood}.
Our coloring lemma also yields a  $\gamma$ that is somewhat smaller than the corresponding
$\gamma$ of Molloy and Reed. We discuss the differences  between their coloring lemma and
ours in more detail in Section~\ref{sec: coloring}.

Let us have a look at some concrete numbers. 
There is a small oversight in the proof of Molloy and Reed (a lost $2$)
that results in the actual bound of $\chi_s'(G)\leq 1.9987 \Delta(G)^2$
instead of the claimed $\chi_s'(G)\leq 1.998 \Delta(G)^2$.

To what improvements lead our lemmas? With our sparsity lemma and 
the coloring lemma of Molloy and Reed
it is possible to obtain $\chi_s'(G)\leq 1.99 \Delta(G)^2$.
Our coloring lemma then leads to the factor $1.93$ in our main theorem.
Viewing all statements as bounds of the form $\chi_s'(G)\leq (2- \epsilon)\Delta(G)^2$,
we improve $\epsilon$ by a factor of $53$.

Finally, even assuming that the coloring lemma could be vastly improved (doubtful), 
this method can never go beyond $1.73\Delta(G)^2$. The sparsity bound in Lemma~\ref{density neighborhood}, which, again, is asymptotically best-possible, does not exclude a clique of size $1.73\Delta(G)^2$ in the neighborhood $\sne_e$ in $L^2(G)$. 
That means, just considering edge densities will never yield a factor smaller than~$1.73$
(and probably not even close to that number).

\section{Density of the strong neighborhood}\label{sec: density}

In this section we prove the sparsity lemma, Lemma~\ref{density neighborhood}.
For any edge $e$ in a graph $G$, we write $\sdeg(e)$ for the degree in $L^2(G)$. 
Then $\sdeg(e)=|\sne_e|$. 

\begin{lemma}\label{strdeglem}
Let $G$ be a graph of maximum degree $\Delta$, and let $e$ be an edge of $G$. Then
\[
\sdeg(e)\leq (2-\alpha-\beta)\Delta^2-2\Delta,
\]
where $\alpha\Delta$ is the number of triangles in $G$ containing $e$, and
where $\beta\Delta^2$ is the number of $4$-cycles containing $e$
plus the number of triangles incident with exactly one endvertex of~$e$.
\end{lemma}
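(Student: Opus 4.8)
The plan is to describe $\sne_e$ explicitly and then count it by double counting over the two neighbourhoods of the endvertices of $e$, with the triangle and $4$-cycle terms emerging as edges that get over-counted in the process. Write $e=xy$, set $A=N(x)\setminus\{y\}$, $B=N(y)\setminus\{x\}$ and $S=A\cup B$. First I would check that an edge $f\neq e$ lies in $\sne_e$ precisely when $f$ has an endvertex in $N(x)\cup N(y)=\{x,y\}\cup S$: if $f$ shares a vertex with $e$ this is immediate, and if $f$ is at distance $2$ from $e$ in the linegraph then some edge meets both $e$ and $f$, forcing an endvertex of $f$ to be adjacent to $x$ or to $y$; the converse is equally routine. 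Splitting off the pairwise disjoint families of edges at $x$ other than $e$, edges at $y$ other than $e$, and the remaining edges $E'$ of $\sne_e$ — those avoiding $\{x,y\}$ but meeting $S$ — then yields $\sdeg(e)=|A|+|B|+|E'|$.

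Next I would bound $|E'|$ by summing, over $s\in S$, the number of neighbours of $s$ outside $\{x,y\}$, correcting for the edges of $G[S]$ which are counted twice: $|E'|=\sum_{s\in S}\bigl(d(s)-|N(s)\cap\{x,y\}|\bigr)-|E(G[S])|$. Plugging in $d(s)\le\Delta$ and using $\sum_{s\in S}|N(s)\cap\{x,y\}|=|A|+|B|$ together with $|A|+|B|=|S|+|A\cap B|$, everything collapses to $\sdeg(e)\le\Delta\,|S|-|E(G[S])|$. Now $|A|,|B|\le\Delta-1$, while $|A\cap B|$ is exactly the number of common neighbours of $x$ and $y$, i.e.\ the number of triangles through $e$, which is $\alpha\Delta$. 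Hence $|S|\le 2\Delta-2-\alpha\Delta$, giving $\sdeg(e)\le(2-\alpha)\Delta^2-2\Delta-|E(G[S])|$.

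It then remains to recognise that $|E(G[S])|$ supplies the $\beta\Delta^2$ term. Each edge inside $A$ forms with $x$ a triangle avoiding $y$; each edge inside $B$ forms with $y$ a triangle avoiding $x$; and each edge with one endvertex in $A\setminus B$ and the other in $B\setminus A$ closes, together with $x$ and $y$, a $4$-cycle through $e$. When $x$ and $y$ have no common neighbour this correspondence is a bijection onto the objects counted by $\beta\Delta^2$, so $|E(G[S])|=\beta\Delta^2$ and the bound follows.

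The one step that needs genuine care — and the one I expect to be the technical heart — is the case $A\cap B\neq\emptyset$. A vertex that is a common neighbour of both $x$ and $y$, and still more an edge both of whose endvertices lie in $A\cap B$ (so that $\{x,y\}$ together with that edge spans a $K_4$), participates in several triangles and several $4$-cycles at once, so the naive matching above is no longer injective. The crux is to show the estimate survives this, by harvesting the additional slack such configurations create — or, equivalently, by being precise about exactly which triangles and $4$-cycles are counted in $\beta\Delta^2$; the preceding steps are by comparison routine.
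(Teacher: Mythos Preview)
Your approach is exactly the paper's: the paper also observes that every edge of $\sne_e$ meets $X=N(u)\cup N(v)\setminus\{u,v\}$, bounds the count by $|X|\cdot\Delta$, and then subtracts the edges that are double-counted, namely those with both ends in $X$. You carry this out more carefully and arrive at the clean intermediate bound $\sdeg(e)\le(2-\alpha)\Delta^2-2\Delta-|E(G[S])|$, which is correct and is essentially all the paper writes down.

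Your hesitation about the final step when $A\cap B\neq\emptyset$ is entirely justified --- but the proposed cure of ``harvesting additional slack'' cannot work, because with $\beta\Delta^2$ defined as in the statement the inequality is simply \emph{false} in that regime. Take $G=K_4$: then $\Delta=3$, $\alpha\Delta=2$, there are two $4$-cycles through $e$ and two triangles meeting exactly one endvertex of $e$, so $\beta\Delta^2=4$; the asserted bound becomes $(2-\tfrac23-\tfrac49)\cdot 9-6=2$, whereas $\sdeg(e)=5$. The mechanism is precisely the one you flag: a single edge of $G[S]$ with an endpoint in $A\cap B$ can serve as the opposite edge of a $4$-cycle through $e$ \emph{and} as the base of a triangle at $u$ (or $v$) simultaneously, so $\beta\Delta^2$ can strictly exceed $|E(G[S])|$, and one cannot legitimately subtract $\beta\Delta^2$. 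The paper's proof does not address this either; in fact, when the lemma is invoked in the proof of Lemma~\ref{density neighborhood}, the paper quietly takes $\beta\Delta^2$ to mean ``the number of edges with both endvertices in $X$'', i.e.\ $|E(G[S])|$. With that reading of $\beta$ --- which coincides with the stated one exactly when $A\cap B=\emptyset$, as you observed --- your argument is already a complete proof and nothing further is required.
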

\begin{proof}
Let $e=uv$.
 Then $|N(u)\cup N(v)\setminus \{u,v\}|\leq 2(\Delta-1)-|N(u)\cap N(v)|= (2-\alpha)\Delta-2$. 
Every edge in $\sne_e$ has at least one of its endvertices
in $N(u)\cup N(v)$. Edges with both endvertices in $N(u)\cup N(v)$ lie in a common 
$4$-cycle with $e$ or form a triangle with either $u$ or $v$, 
and thus count towards $\beta\Delta^2$. In total we obtain
\[
\sdeg(e)\leq |N(u)\cup N(v)\setminus \{u,v\}|\cdot\Delta-\beta\Delta^2\leq ((2-\alpha)\Delta-2)\Delta-\beta\Delta^2,
\]
which gives the affirmed bound.
\end{proof}

We remark that the inequality given in Lemma~\ref{strdeglem}
is even an equality if $G$ is $\Delta$-regular.
We will frequently use the observation that the containment of an edge $f$ in $k$ $4$-cycles
implies $\sdeg(f)\leq 2\Delta^2-k$.

We will occasionally need to measure how many neighbors a vertex has in some
subset of the vertex set. So, for a vertex $v$ in a graph $G$ and some set $X\subseteq V(G)$,
we write $d_X(v)$ for $|N_G(v)\cap X|$. We will similarly use the notation $\sdeg_F(e)$
for the degree of an edge $e$ in $L^2(G)$ restricted to some edge set $F$.

\newcommand{\xycyc}{C^{X,Y}_4}

\begin{proof}[Proof of Lemma~\ref{density neighborhood}]
Without loss of generality, we may assume $G$ to be $\Delta$-regular. Indeed, 
if $G$ is not then it may be embedded into a $\Delta$-regular graph, in which 
the number of edges induced by $\sne_e$ will only be larger. (The embedding 
is a standard technique: iteratively make a second copy of $G$ and connect 
copies of vertices of too low  degree by an edge.)

Let $e=uv$ and set $X=N_G(u)\cup N_G(v)\setminus \{u,v\}$, while $Y=N_G(X)\sm(X\cup \{u,v\})$. 
Then, using the notation of Lemma~\ref{strdeglem}, we set $\alpha\Delta=|N_G(u)\cap N_G(v)|$
to be the number of triangles containing $e$, and let $\beta\Delta^2$ be the number 
of edges with both endvertices in $X$.
Observe  that $|X|=(2-\alpha)\Delta-2$. 
We furthermore note that $0\leq \alpha,\beta\leq 1$;  for $\beta$ this follows
from $2\beta\Delta^2\leq |X|\cdot\Delta$. 
We denote by $m_e$, the number of edges induced by $\sne_e$
in $L^2(G)$. Our objective is to upper-bound $m_e$.

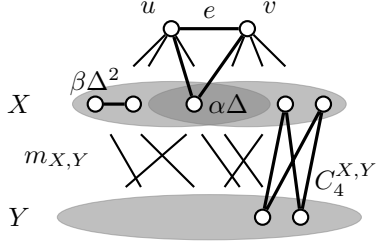
\begin{figure}[t]
\centering
\begin{tikzpicture}[every edge quotes/.style={}]
\tikzstyle{hvertex}=[thick,circle,inner sep=0.cm, minimum size=2mm, fill=white, draw=black]
\tikzstyle{hedge}=[very thick]
\tikzstyle{tedge}=[thick]

\node[hvertex,label=above left:$u$] (u) at (3,3){};
\node[hvertex,label=above right:$v$] (v) at (4,3){};

\draw[hedge] (u) -- (v) node[midway, auto]{$e$};
\draw[tedge] (u) edge (2.5,2.5) edge (2.7,2.5) edge (3.3,2.5);
\draw[tedge] (v) edge (4.2,2.5) edge (4.5,2.5) edge (3.8,2.5);

\begin{scope}[opacity=0.5]
\draw[draw=gray,fill=gray] (3,2) ellipse (1.3 and 0.3);
\draw[draw=gray,fill=gray] (4,2) ellipse (1.3 and 0.3);

\draw[draw=gray,fill=gray] (3.5,0.5) ellipse (2 and 0.3);
\end{scope}

\node at (1,2) {$X$};
\node at (1,0.5) {$Y$};
\node (mxy) at (1.5,1.25) {$m_{X,Y}$};

%\node (alp) at (5,2.5){$\alpha\Delta$};
\node (alp) at (3.75,2){$\alpha\Delta$};

\node[hvertex] (x) at (3.3,2){};
\draw[hedge] (u) -- (x) -- (v); 

\draw[tedge] (2.2,1.6) -- (2.6,0.9);
\draw[tedge] (3.2,1.6) -- (2.4,0.9);
\draw[tedge] (2.6,1.6) -- (3.3,0.9);
\draw[tedge] (4.2,1.6) -- (3.7,0.9);
\draw[tedge] (3.7,1.6) -- (4.2,0.9);
\draw[tedge] (3.4,1.6) -- (3.9,0.9);
%\draw[tedge] (4.4,1.6) -- (4.6,0.9);

\node[hvertex] (x1) at (4.5,2){};
\node[hvertex] (x2) at (5,2){};
\node[hvertex] (y1) at (4.2,0.5){};
\node[hvertex] (y2) at (4.7,0.5){};
\draw[hedge] (x1) -- (y1) -- (x2) -- (y2) -- (x1);

\node (CXY) at (5.3,1){$C_4^{X,Y}$};

\node[hvertex] (x3) at (2,2){};
\node[hvertex] (x4) at (2.5,2){};
\draw[hedge] (x3) to (x4);
\node at (2,2.3){$\beta\Delta^2$};

\end{tikzpicture}
\caption{Some of the parameters used in the proof of Lemma~\ref{density neighborhood}}\label{strdeglemfig}
\end{figure}

Let us start with the estimation of $m_e$. 
Writing $\overline{\sne_e}$ for $E(G)\sm \sne_e$ we obtain
\[
2m_e=\sum_{f\in\sne_e}\sdeg_{\sne_e}(f) 
= \sum_{f\in\sne_e}\left(\sdeg(f)-\sdeg_{\overline{\sne_e}}(f)\right)
\]
To get an upper bound on $\sum_{f\in\sne_e}\sdeg(f)$ we use Lemma~\ref{strdeglem}, where
we omit the triangles and only count those $4$-cycles that 
consist entirely of edges between $X$ and $Y$. 
Let $\xycyc$ be the number of those $4$-cycles, that is, the
number of those $4$-cycles that consist only of edges with one endvertex in $X$ 
and the other in $Y$. Observe that each cycle counted by $\xycyc$ reduces the degree 
of four edges in $\sne_e$. Thus, 
%$\sum_{f\in\sne_e}\sdeg(f)\leq \sum_{f\in\sne_e} (2\Delta^2-2\Delta)-4\xycyc$,
$\sum_{f\in\sne_e}\sdeg(f)\leq \big(\sum_{f\in\sne_e} 2\Delta^2\big)-4\xycyc$,
by Lemma~\ref{strdeglem}. Using the lemma again, this time to upper-bound $|\sne_e|$, and substituting 
in our above estimation of $m_e$, we get
\begin{align*}
2m_e&\leq 2\Delta^2|\sne_e| -4\xycyc-\sum_{f\in\sne_e}\sdeg_{\overline{\sne_e}}(f)\\
&\leq 2\Delta^2 \left((2-\alpha-\beta)\Delta^2-2\Delta\right)
-4\xycyc-\sum_{f\in\sne_e}\sdeg_{\overline{\sne_e}}(f)\\
&= 2(2-\alpha-\beta)\Delta^4-4\Delta^3
-4\xycyc-\sum_{f\in\sne_e}\sdeg_{\overline{\sne_e}}(f)
\end{align*}

In order to obtain a lower bound on $\sum_{f\in\sne_e}\sdeg_{\overline{\sne_e}}(f)$, 
we consider paths of the form $pxyq$ in $G$, where $x\in X$, $y\in Y$ and $q\notin X$. 
The first edge $px$ then is in $\sne_e$, while the last edge $yq$ is outside. So, each 
such path $pxyq$ contributes~$1$ to $\sum_{f\in\sne_e}\sdeg_{\overline{\sne_e}}(f)$. 
Since $G$ is $\Delta$-regular, there are $\Delta-1$ such paths for each fixed $xyq$. 
Counting the number of such $xyq$ we arrive at $\sum_{y\in Y}d_X(y)(\Delta-d_X(y))$.
For later use, we give this parameter a name
\[
\gamma\Delta^3:=\sum_{y\in Y}d_X(y)(\Delta-d_X(y)),
\]
and observe that $0\leq\gamma\leq \tfrac{1}{2}$. Indeed, for any $y\in Y$, 
we have $d_X(y)(\Delta-d_X(y))\leq d_X(y)^2/4 \leq \tfrac{\Delta}{4}d_X(y)$ and consequently, 
$\gamma\Delta^3\leq \sum_{y\in Y}\tfrac{\Delta}{4}\cdot d_X(y)\leq \tfrac{\Delta}{4}\cdot 2\Delta^2$,
as there can be at most $2\Delta^2$ edges between $X$ and $Y$.
%It is necessary to bound $\gamma$ as otherwise we could believe
%that $\gamma\Delta^3$ reaches $\Delta^4$

Summing up our discussion:
\[
\sum_{f\in\sne_e}\sdeg_{\overline{\sne_e}}(f)\geq (\Delta-1)\gamma\Delta^3,
\]
which leads to 
\begin{equation}\label{meest}
m_e\leq \left(2-\alpha-\beta-\frac{\gamma}{2}\right)\Delta^4-2\xycyc+\left(\frac{\gamma}{2}-2\right)\Delta^3.
\end{equation}

It remains to estimate $\xycyc$. For this, 
let us first compute $m_{X,Y}$, the number of edges with one endvertex in $X$ and the other in $Y$.
On the one hand, we have

\[
\sum_{x\in X}d_G(x)=m_{X,Y}+2\beta\Delta^2+|X|,
\] 
while $\Delta$-regularity, on the other hand, gives us
\(
\sum_{x\in X}d_G(x)=\Delta\cdot|X|.
\)
Together with $|X|=(2-\alpha)\Delta-2$, this implies
\begin{equation}\label{mXY}
m_{X,Y}=(2-\alpha-2\beta)\Delta^2-(4-\alpha)\Delta+2.
\end{equation}

For $x_1,x_2\in X$ define $c(x_1,x_2)$ to be the number of common neighbors of 
$x_1$ and $x_2$ in $Y$, where we put $c(x_1,x_2)=0$ if $x_1=x_2$.
Then
\[
\xycyc=\sum_{x_1,x_2\in X}\binom{c(x_1,x_2)}{2}.
\]
We compute
\[
\sum_{x_1,x_2\in X}c(x_1,x_2) = \sum_{y\in Y}\binom{d_{X}(y)}{2} 
= \frac{1}{2} \sum_{y\in Y} (d_X(y))^2-\frac{1}{2}m_{X,Y}.
\]
Using the definition of $\gamma$, this gives
\begin{align*}
	\sum_{x_1,x_2\in X}c(x_1,x_2) 
	=&\frac{1}{2}\left( \Delta\sum_{y\in Y} d_{X}(y) -\gamma\Delta^3 \right)-\frac{1}{2}m_{X,Y}\\
	=&\frac{1}{2}\left( (\Delta-1) m_{X,Y} -\gamma\Delta^3 \right).
\end{align*}
Using $m_{X,Y}\leq 2\Delta^2$ as well as~\eqref{mXY}, we get a lower and an upper bound:
\begin{equation}\label{commonest}
\frac{1}{2}\left( \left(2-\alpha-2\beta- \gamma\right)\Delta^3-6\Delta^2\right)
\leq
\sum_{x_1,x_2\in X}c(x_1,x_2) \leq \left(1-\frac{\gamma}{2}\right)\Delta^3
\end{equation}
We come back to the calculation of $\xycyc$:
\begin{align}
\notag	\xycyc&= \sum_{x_1,x_2\in X}\binom{c(x_1,x_2)}{2} = 
\frac{1}{2}\sum_{x_1,x_2\in X}\left( c(x_1,x_2)^2-c(x_1,x_2)\right)\\
& \geq \frac{1}{2}\sum_{x_1,x_2\in X} c(x_1,x_2)^2 - \frac{1}{2}\left(1-\frac{\gamma}{2}\right)\Delta^3, \label{C4ineq}
\end{align}
where we used~\eqref{commonest}.
We use the Cauchy-Schwarz inequality:
\begin{align*}
\sum_{x_1,x_2\in X} c(x_1,x_2)^2
&\geq \binom{|X|}{2} \left(\frac{\sum_{x_1,x_2\in X}c(x_1,x_2)}{\binom{|X|}{2}}\right)^2\\
&\geq 2\cdot\frac{\left(\sum_{x_1,x_2\in X}c(x_1,x_2)\right)^2}{|X|^2}\\
&\geq 2\cdot\frac{\left(\,\frac{1}{2}\left((2-\alpha-2\beta-\gamma)\Delta^3-6\Delta^2\right)\,\right)	^2}{(2-\alpha)^2\Delta^2},
\end{align*}
where the last inequality is because of $|X|=(2-\alpha)\Delta-2$ 
and~\eqref{commonest}.
We continue
\begin{align*}
\sum_{x_1,x_2\in X} c(x_1,x_2)^2
&\geq \frac{1}{2}\left(\frac{(2-\alpha-2\beta-\gamma)^2\Delta^4}{(2-\alpha)^2}-12\cdot\frac{(2-\alpha)\Delta^3}{(2-\alpha)^2}\right)\\
&\geq \frac{(2-\alpha-2\beta-\gamma)^2}{2(2-\alpha)^2}\Delta^4-6\Delta^3.
\end{align*}
We substitute this in our estimation~\eqref{C4ineq} of $\xycyc$:
\begin{align*}
\xycyc&\geq \frac{1}{2}\left(\frac{(2-\alpha-2\beta-\gamma)^2\Delta^4}{2(2-\alpha)^2}-6\Delta^3\right)
- \frac{1}{2}\left(1-\frac{\gamma}{2}\right)\Delta^3\\
&= \frac{1}{2}\left(
\frac{(2-\alpha-2\beta-\gamma)^2\Delta^4}{2(2-\alpha)^2}- 
 \left(7-\frac{\gamma}{2}\right)\Delta^3
\right).
\end{align*}
We can finally complete our estimation~\eqref{meest} of $m_e$ to:
\[
m_e\leq \left(2-\alpha-\beta - \frac{\gamma}{2}\right)\Delta^4   - \frac{\left(2-\alpha-2\beta- \gamma\right)^2}{2(2-\alpha)^2}\Delta^4+
5\Delta^3.
\]
To see that this gives
\begin{align*}
	m_e \leq \frac{3}{2}\Delta^4+5\Delta^3,
\end{align*}
observe first that we may assume that $0\leq \alpha\leq \frac{1}{2}$ or we are already done.
Setting $x= \beta + \frac{\gamma}{2}$,  we may also assume $x\leq \frac{1}{2}$.
Let $f(\alpha,x)= 2-\alpha -x - \frac{\left(2-\alpha-2x\right)^2}{2(2-\alpha)^2}$.
Note that $\frac{\partial f(\alpha,x)}{\partial \alpha}=-1+ \frac{2x(2-\alpha-2x)}{(2- \alpha)^3}<0$ for $0\leq x\leq \frac{1}{2}$.
Thus we may assume that $\alpha=0$. Consequently, it remains to verify 
that $2-x- \frac{(2-2x)^2}{8}\leq \frac{3}{2}$, which is an elementary task.
\end{proof}

We note that a good number of elements used in the proof appear already 
in the article of Molloy and Reed~\cite{MR97}:  triangles through~$e$, 
 edges in $X$, paths $xyq$ and  
$4$-cycles between $X$ and $Y$. 
Our contribution consists in parameterising these elements and then combining 
the parameters in 
the right way to give a nearly tight bound.

Lemma~\ref{density neighborhood} naturally gives an upper bound on the size of the largest clique in $L^2(G)$.
Although cliques have a very simple structure 
we were not able to push the bound given in Theorem~\ref{thm: clique} significantly further.

\begin{proof}[Proof of Theorem~\ref{thm: clique}]
Let $K$ be a largest strong clique, that is, a largest clique in $L^2(G)$,
and let $\kappa\Delta^2$ be its size. 
If $e\in K$ is an edge of $G$, then its neighborhood induces by Lemma~\ref{density neighborhood}
a graph of at most $\frac{3}{2}\Delta^4+5\Delta^3$ edges. Thus
\[
\binom{\kappa\Delta^2-1}{2}\leq \frac{3}{2}\Delta^4+5\Delta^3,
\]
which implies 
$\kappa\leq\frac{3}{2\Delta^2}+\sqrt{3+\frac{10}{\Delta}+\frac{9}{4\Delta^4}}<1.74$
for $\Delta\geq 400$.
\end{proof}

\section{The sparsity lemma is best-possible}\label{sec: density example}

In this section we describe a family of graphs
with  strong neighborhoods that almost reach the upper density 
bound of Lemma~\ref{density neighborhood}.
To show this, we turn to \emph{Hadamard codes}. 
For every $k\geq 2$, the corresponding Hadamard code
consists of $2\cdot 2^k$ $0,1$-strings of length $2^k$ each with certain properties.
For instance, for $k=2$, 
the Hadamard code is
\[
\{\textsf{{\small 1111,0000,1100,0011,0110,1001,1010,0101}}\}
\]
Notably, the code always contains the all-$0$-string and the all-$1$-string. 
We drop these and interpret the remaining code words  
as subsets of some fixed ground set of $n=2^k$ elements
$\{x_1,\ldots, x_n\}$.
Then we can see the Hadamard code as a set $\mathcal H_k$ of subsets  
of $\{x_1,\ldots, x_n\}$ with the following properties:
\begin{enumerate}[\rm (i)]
\item \label{Hadauni}$\mathcal H_k$ contains $2(n-1)$ member-sets, each of which has cardinality $\tfrac{n}{2}$
\item \label{Hadareg}every element $x_i$ lies in precisely $n-1$ member-sets of $\mathcal H_k$
\item \label{Hadaint}if $S,S'\in\mathcal H_k$ are two distinct member-sets of $\mathcal H_k$
then $|S\cap S'|\in\{0,\tfrac{n}{4}\}$.
\end{enumerate}
For a proof and more details, see for example van Lint~\cite{vLi99}.

Based on $\mathcal H_k=\{S_1,\ldots,S_{2n-2}\}$ we construct a graph $G$.
Make a copy $x'_i$ for each $x_i$, and 
put $X=\{x_1,\ldots, x_n,x'_1,\ldots, x'_n\}$. Let, furthermore, $Y=\{y_1,\ldots, y_{2n-2}\}$
be a set of $2n-2$ vertices, that is disjoint from $X$. We define a graph $G$ on 
$X\cup Y\cup\{u,v\}$, where $u,v$ are two vertices outside $X\cup Y$. 
First off, make $u$ complete to $x_1,\ldots, x_n$ and $v$ complete to $x'_1,\ldots, x'_n$,
and make $u$ adjacent to $v$. Now, let us add a bipartite graph on $X\cup Y$. 
For this, we make each $y_j$ adjacent to every vertex in $S_j$ 
and to the set $S'_j$ of their copies. See Figure~\ref{figfig} for an illustration.
With this, each vertex in $X$ has $(n-1)+1$ neighbors, 
by property~\eqref{Hadareg} of $\mathcal H_k$,
and each $y_j$ has degree $2\tfrac{n}{2}=n$, by property~\eqref{Hadauni}.
Each of $u$ and $v$ has degree $n+1$. Thus the maximum degree is $\Delta=n+1$.
We note, furthermore, that there are $(2n-2)n=2\Delta^2-\bigO(\Delta)$ edges between $X$ and $Y$.

\begin{figure}[t]
\centering
\begin{tikzpicture}[every edge quotes/.style={}]
\tikzstyle{hvertex}=[thick,circle,inner sep=0.cm, minimum size=2mm, fill=white, draw=black]
\tikzstyle{hedge}=[thick]

\def\twinstep{0.3cm}
\def\stepsize{1.4cm}
\def\ystepsize{1cm}
\def\hoehe{1.6cm}
\def\xdilat{2.5*\ystepsize-1.5*\stepsize}

\foreach \i in {0,1,2,3}{
  \node[hvertex] (x\i) at (\xdilat+\i*\stepsize-\twinstep,\hoehe) {};
  \node[hvertex] (xx\i) at (\xdilat+\i*\stepsize+\twinstep,\hoehe) {};
}

\foreach \j in {0,1,2,3,4,5}{
  \node[hvertex] (y\j) at (\j*\ystepsize,0){};
}

\node at (-0.5,0){$Y$};
\node at (-0.5,\hoehe){$X$};

\node[hvertex,label=above left:$u$] (u) at (2.5*\ystepsize-0.5cm,1.7*\hoehe){};
\node[hvertex,label=above right:$v$] (v) at (2.5*\ystepsize+0.5cm,1.7*\hoehe){};

\draw[hedge] (u) -- (v);

\foreach \i in {0,1,2,3}{
  \draw[hedge] (u) -- (x\i);
  \draw[hedge] (v) -- (xx\i);
}

\draw[hedge] (x0) -- (y0) -- (xx0); %1100
\draw[hedge] (x1) -- (y0) -- (xx1);

\draw[hedge] (x2) -- (y1) -- (xx2); %0011
\draw[hedge] (x3) -- (y1) -- (xx3);

\draw[hedge] (x0) -- (y2) -- (xx0); %1010
\draw[hedge] (x2) -- (y2) -- (xx2);

\draw[hedge] (x1) -- (y3) -- (xx1); %0101
\draw[hedge] (x3) -- (y3) -- (xx3);

\draw[hedge] (x1) -- (y4) -- (xx1); %0110
\draw[hedge] (x2) -- (y4) -- (xx2);

\draw[hedge] (x0) -- (y5) -- (xx0); %1001
\draw[hedge] (x3) -- (y5) -- (xx3);

\node[label=below:\textsf{\small 1100}] at (y0){};
\node[label=below:\textsf{\small 0011}] at (y1){};
\node[label=below:\textsf{\small 1010}] at (y2){};
\node[label=below:\textsf{\small 0101}] at (y3){};
\node[label=below:\textsf{\small 0110}] at (y4){};
\node[label=below:\textsf{\small 1001}] at (y5){};

\end{tikzpicture}
\caption{The graph $G$ for $k=2$}\label{figfig}
\end{figure}
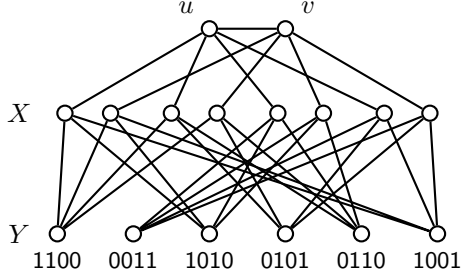

We calculate the number of $4$-cycles in $G-u-v$. 
Observe that for all $i<j$ 
the number $c(y_i,y_j)$ of common neighbors of $y_i$ and $y_j$ 
 is at most $2\cdot \tfrac{n}{4}$ (by property~\eqref{Hadaint}).
Thus, the number of $4$-cycles in $G-u-v$ is
\begin{align*}
\sum_{i<j}\binom{c(y_i,y_j)}{2} \leq \binom{2n-2}{2}\cdot \binom{n/2}{2}  
%=& \left(\binom{2n-2}{2}-(n-1)\right)\cdot\binom{n/2}{2}\\
%=& \left(\frac{4(n-1)^2-2(n-1)-2(n-1)}{2}\right)\cdot\frac{n^2-2n}{8}\\
%=& \frac{4(n-1)}{16}\left((n-1)-1\right)\cdot n(n-2)=\frac{n(n-1)(n-2)^2}{4}\\
=& \frac{n^4}{4}+\bigO(n^3)= \frac{\Delta^4}{4}+\bigO(\Delta^3).
\end{align*}
Since there are only $2\Delta-1$ edges incident with $u$ or $v$,
deleting all these edges will only result in a loss of $\bigO(\Delta^3)$
edges in $L^2(G)$. In the square of the linegraph of $G-u-v$, however, 
only the $4$-cycles counted above reduce the degree $\sdeg(e)$ 
away from the maximal possible value $2\Delta^2$ (recall the argumentation in the proof of Lemma~\ref{strdeglem}).
Thus, 
recalling that there are $2\Delta^2-\bigO(\Delta)$ edges between $X$
and $Y$, we obtain that the number $m_{uv}$ of edges induced by $\sne_{uv}$ in $L^2(G)$ satisfies
\[
2m_{uv} \geq 2\Delta^2\cdot (2\Delta^2-\bigO(\Delta)) - 4\left(\frac{\Delta^4}{4}+\bigO(\Delta^3)\right) - \bigO(\Delta^3) 
\]
(Note that each $4$-cycle reduces the degree of four edges in $L^2(G)$.) Therefore
\[
m_{uv}\geq \frac{3}{2}\Delta^4-\bigO(\Delta^3),
\]
which asymptotically coincides with the bound of Lemma~\ref{density neighborhood}.

\section{The coloring procedure}\label{sec: coloring}

Molloy and Reed~\cite{MR97} described a probabilistic method, called 
\emph{naive coloring procedure} in~\cite{MR02}, to partially color a sparse graph. 
In the procedure,  every vertex first receives 
a color independently and uniformly at random. Then follows a conflict 
resolution step: whenever two adjacent vertices received the same color, 
both become uncolored. The brilliant insight here is that, with a non-negligible 
probability, in every neighborhood a good number of colors appear at least 
twice at the end of this procedure. In this way, the partial coloring 
\emph{saves} colors in comparison to a $(\Delta(G)+1)$-coloring.
This, then makes it possible to complete 
the partial coloring greedily to a full coloring. 

In the easiest manifestation of the procedure, whenever there is a conflict, 
that is, whenever two adjacent vertices receive the same color, both vertices lose their 
color. Molloy and Reed hint at a possible a improvement that is less wasteful: 
if additionally random weights on vertices are chosen,  only the vertex of lower 
weight need to be uncolored. However, the analysis becomes much 
more tedious. We propose a different conflict resolution mechanism that allows
for a simpler analysis. In addition, we not only count colors that are 
used exactly twice in the neighborhood of a fixed vertex but 
also take into account multiple occurrences. 
Molloy and Reed mentioned such a modification, too, but did not discuss it 
in detail.

\medskip
We now describe the modified Molloy and Reed coloring procedure. 
For this, assume $G$ to be an $r$-regular graph, which 
we will color with $\C=\lceil (1-\gamma)r\rceil$ colors, where $\gamma\in(0,1)$
is a constant. 
The following random experiment is performed.
\begin{enumerate}
	\item Color every vertex uniformly and independently at random from the set $\{1,\ldots,\C\}$.
	\item Choose for every edge $uv$ independently and uniformly at random an orientation $d_{uv}$.
	\item If $uv$ is an edge and $u$ and $v$ received the same color, 
	then uncolor $u$ if $d_{uv}$ points towards $u$ and uncolor $v$ otherwise.
\end{enumerate}

The key parameter is the number of colors that are saved
in the neighborhood of an arbitrary vertex $u$, that is, the number
of vertices that are assigned a color that is already used by some other
vertex in the neighborhood. To estimate this quantity, we introduce the random variable $P_u$
that counts the number of pairs of non-adjacent vertices in $N(u)$ that have, 
after the uncoloring step, the same color. 
To control overcounting, we also define $T_u$, the number of triples of distinct
non-adjacent vertices $v,w,x$ in $N(u)$ that have the same color at the end of 
the coloring procedure.

Most of the effort in the proof of Theorem~\ref{coloring lemma} will 
be spent on estimating the expected values of $P_u$ and $T_u$. 
Later, in Section~\ref{sec: proof concentration}, we will show that the random variables are  
strongly concentrated around their expectation:

\begin{lemma}\label{concentration}
Let $\gamma\in(0,1)$.
There is an $R$ so that for every $r$-regular graph $G$ with $r\geq R$ 
it holds that
\begin{align*}
\bP\!\left[\,|P_u-\bE[P_u]|\geq\sqrt{r}\log^3 r\,\right]&\leq r^{- \frac{1}{2}\log\log r}\quad\text{ and}\\
\bP\!\left[\,|T_u-\bE[T_u]|\geq\sqrt{r}\log^4 r\,\right]&\leq r^{- \frac{1}{2}\log\log r}
\end{align*}
when the coloring procedure is performed with $\C=\lceil (1-\gamma)r\rceil$ colors.
\end{lemma}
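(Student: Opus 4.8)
The plan is to apply the exceptional-outcome version of Talagrand's inequality announced in the abstract and developed in Section~\ref{sec: talagrand}. The random variables $P_u$ and $T_u$ live on a product space: the colors of all vertices and the orientations of all edges are mutually independent coordinates. The obstacle to a naive application is that a single coordinate change --- recoloring one vertex $w$ --- can in principle flip the colors of all vertices adjacent to $w$ in the uncoloring step, and through them affect a huge number of pairs or triples counted by $P_u$ or $T_u$; so $P_u$ is \emph{not} boundedly Lipschitz in the usual sense. The point of the ``exclude exceptional outcomes'' formulation is precisely to discard the low-probability configurations in which some vertex has many conflicts.

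\medskip
First I would set up the product probability space and identify the exceptional event $\mathcal{B}$: say, that some vertex of $N(u)\cup N(N(u))$ has more than $\log^2 r$ (or some such polylogarithmic number of) neighbours that initially received its own color. A standard first-moment / Chernoff estimate shows $\bP[\mathcal{B}]$ is superpolynomially small --- certainly smaller than $r^{-\log\log r}$ --- since the number of conflicts at a fixed vertex is a sum of independent indicators with mean $r/\C = 1/(1-\gamma) = O(1)$. Second, I would verify the Lipschitz (``certifiability'') hypotheses of the modified Talagrand inequality \emph{on the complement of $\mathcal{B}$}: changing the color of one vertex, or the orientation of one edge, changes $P_u$ by at most $O(\log^2 r)$ and $T_u$ by at most $O(\log^4 r)$ (a triple is destroyed/created only if one of its three vertices changes color or loses/regains it), and each such change is certified by a bounded number of coordinates --- the colors and orientations in a radius-$2$ ball around the affected vertices. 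Third, plugging $c = O(\log^2 r)$ (resp. $O(\log^4 r)$) and the certificate bound $\rho = O(\log^2 r)$ into the Talagrand bound $\bP[|X - \bE X| > t + (\text{small})] \le \exp(-\Omega(t^2/(c^2 \rho \bE X)))$, together with the crude bound $\bE[P_u] = O(r)$ and $\bE[T_u] = O(r^{3/2})$ or so, and choosing $t = \sqrt{r}\log^3 r$ (resp. $\sqrt{r}\log^4 r$), makes the exponential term $r^{-\omega(\log\log r)}$, which absorbs the excluded-event probability and yields the claimed $r^{-\frac12\log\log r}$.

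\medskip
The main subtlety --- and where I expect to spend real effort --- is bookkeeping the \emph{two} sources of randomness and the effect of the orientation coordinates correctly, and making sure the ``exceptional event'' is itself defined in a way that is compatible with the hypotheses of the Section~\ref{sec: talagrand} inequality (i.e.\ the bad set must be something one is \emph{allowed} to exclude in that framework, typically by being definable from the coordinates in a controlled way, and the Lipschitz/certificate bounds must hold \emph{pointwise off the bad set}, not merely in expectation). A secondary point is that $T_u$ counts triples, so the ``one coordinate, bounded effect'' bound is genuinely $O(\log^4 r)$ rather than $O(\log^2 r)$ --- changing one vertex's status can alter the color class it sits in for up to $\log^2 r$ of its neighbours' worth of pairs, and each such altered vertex participates in $O(\log^2 r)$ triples --- so the variances scale differently and the two displayed inequalities need slightly different thresholds, exactly as stated. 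Once these ingredients are in place the conclusion is a direct substitution into the exceptional-outcome Talagrand bound of Section~\ref{sec: talagrand}.
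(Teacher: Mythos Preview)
Your overall strategy---apply the exceptional-outcome Talagrand inequality of Section~\ref{sec: talagrand} after discarding a superpolynomially unlikely bad set---is exactly what the paper does. But three of your concrete choices would not go through as written.

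First, your exceptional event $\mathcal B$ (some vertex in $N(u)\cup N(N(u))$ has more than $\log^2 r$ same-colored neighbors) controls the wrong quantity. It bounds how many \emph{conflicts} a recoloring can propagate to, but not how many pairs a single vertex of $N(u)$ participates in: if $v\in N(u)$ flips from uncolored to colored, $P_u$ changes by roughly the size of $v$'s color class in $N(u)$, which your $\mathcal B$ does not cap. The paper's exceptional set $\xom$ is instead that some \emph{color} appears more than $\log r$ times in $N(u)$; this directly bounds the contribution of any one color to $P_u$ by $\binom{\log r}{2}$ and to $T_u$ by $\binom{\log r}{3}$, which is what drives the choice $c=\log^2 r$ (resp.\ $c=\log^3 r$, not $\log^4 r$).

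Second, the $(s,c)$-certificate notion in Theorem~\ref{thm: talagrand nice} is not a per-coordinate Lipschitz-plus-local-witness condition. It asks for one \emph{global} index set $I$ with $|I|\le s$ such that any $\omega'\notin\xom$ agreeing with $\omega$ on all but fewer than $t/c$ coordinates of $I$ has $X(\omega')<X(\omega)+t$. The paper takes $I$ to consist of all of $N(u)$ together with, for each vertex uncolored in step~3, one witnessing neighbor and the connecting edge; hence $s=3r$, not $O(\log^2 r)$.

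Third, your arithmetic does not close. With your parameters $c=O(\log^2 r)$, $\rho=O(\log^2 r)$, $\bE[P_u]=O(r)$, and $t=\sqrt r\log^3 r$, one gets $t^2/(c^2\rho\,\bE[P_u])=O(1)$, so the exponential term is a constant rather than $r^{-\omega(\log\log r)}$. With the paper's $s=3r$, $c=\log^2 r$ and the bound $\exp(-t^2/(16c^2 s))$ of Theorem~\ref{thm: talagrand nice} one obtains $\exp(-\log^2 r/48)$, which suffices. (Incidentally $\bE[T_u]=O(r)$, not $O(r^{3/2})$.)
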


In order to prove the existence of a global partial coloring with certain nice properties,
we use the Lov\'asz Local Lemma to deduce this from our local structure.

\newtheorem*{lllem}{Lov\'asz Local Lemma}

\begin{lllem}
Let $p\in [0,1)$, and 
 $\mathcal A$ be a finite set of events so that for every $A\in\mathcal A$
\begin{enumerate}[\rm (i)]
\item $\bP[A]\leq p$; and
\item $A$ is independent of all but at most $d$ of the other events in $\mathcal A$.
\end{enumerate} 
If $4pd\leq 1$, then the probability that none of the events in $\mathcal A$ 
occur is strictly positive.
\end{lllem}

\begin{proof}[Proof of Lemma~\ref{coloring lemma}]
How large $R$ has to be will become clear at the end of the proof.
Consider some graph $G$ with maximum degree at most~$r$ satisfying the density condition 
on the neighborhoods. 
We may assume that $G$ is $r$-regular, otherwise we embed $G$, 
as in Lemma~\ref{density neighborhood},
 in an $r$-regular graph while keeping the local density condition.

We will color $G$ with $\C=\lceil(1-\gamma)r\rceil$ colors, which may be slightly more 
that the claimed bound of $\chi(G)\leq (1-\gamma) r$. However, by choosing $R$ and 
thus $r$ large enough we can find a $\gamma'$ that still satisfies~\eqref{gamma condition}
and for which $\lceil(1-\gamma')r\rceil\leq (1-\gamma) r$ for every $r\geq R$.
Thus, if necessary, we may replace $\gamma$ by $\gamma'$ in what follows. 

Pick some vertex $u$ of $G$.
We start with the estimation of $P_u$, the number of non-adjacent neighbors
of $u$ with the same (final) color. 
To this end,  consider two non-adjacent neighbors $v$ and $w$ of $u$.  
Note first that the probability that $v$ and $w$ receive the same color in step~1 is
equal to $\tfrac{1}{\C}$. Assuming that this is the case, we observe that
in order for $v$ and $w$ to keep their color in step~3, all the edges between $\{v,w\}$
and the neighbors of the same color as $v,w$ have to be chosen in step~2 so that they
point away from $\{v,w\}$. Thus, if $v$ and $w$ have $k$ common neighbors, the
probability that they both keep their (common) color is 
\[
 \left(1- \frac{1}{2\C} \right)^{2r-2k} \left(1- \frac{3}{4}\cdot\frac{1}{\C} \right)^{k},
\]
where the factor~$\tfrac{3}{4}$ stems from the fact that, out of four 
orientations of the two edges between $v,w$ and a common neighbor,
only one of these allows both $v,w$ to keep their color. 

In total, we get
\begin{align*}
	&\bP[v,w \text{ received the same color and both stay colored}]\\
	&\quad= \frac{1}{\C} \left(1- \frac{1}{2\C} \right)^{2r-2k} \left(1- \frac{3}{4}\cdot\frac{1}{\C} \right)^{k}
	=\frac{1}{\C} \left(1- \frac{1}{2\C} \right)^{2r} 
	\left(\frac{1- \frac{3}{4\C} }{1- \frac{1}{\C} + \frac{1}{4\C^2} }\right)^{k}.
%	& \geq (1+o(1)) \frac{1}{(1-\gamma)r}e^{- \frac{1}{1-\gamma}}
\end{align*}
Note that, for any $\C\geq 1$, we have 
\[
\frac{1- \frac{3}{4\C} }{1- \frac{1}{\C} + \frac{1}{4\C^2} } \geq 1.
\]
Thus, if $r$ is large enough, then
\begin{align*}
	&\bP[v,w \text{ received the same color and both stay colored}]\\
&\qquad \geq \frac{1}{\C} \left(1- \frac{1}{2\C} \right)^{2r}  
= \frac{1}{\lceil(1-\gamma)r\rceil} \left(1- \frac{1}{2\lceil(1-\gamma)r\rceil} \right)^{2r}\\
	&\qquad \geq (1+o(1)) \frac{1}{(1-\gamma)r}e^{- \frac{1}{1-\gamma}}
\end{align*}
% last inequality by convergence of $(1+x/n)^n$ to $e^x$. 
As there are at least $\delta\binom{r}{2}$ many pairs of non-adjacent vertices 
in $N(u)$, the calculation above implies
\begin{align*}
	\bE[P_u]&\geq (1+o(1)) \delta \binom{r}{2}\frac{1}{(1-\gamma)r}e^{- \frac{1}{1-\gamma}}\\
	&=(1+o(1))\frac{\delta r}{2(1-\gamma)}e^{- \frac{1}{1-\gamma}}.
\end{align*}

We will need a similar estimation for triples of non-adjacent neighbors of $u$. 
So, assume $v,w,x$ to be three vertices in $N(u)$ that are pairwise non-adjacent. 
For $1\leq i \leq 3$, let $k_i$ be the number of vertices with $i$ neighbors in $\{v,w,x\}$.
Since $G$ is $r$-regular,
$k_1+2k_2+3k_3=3r$.

The probability that all three of $v,w,x$ receive the same color is $\tfrac{1}{\C^2}$. 
The probability that all three retain their color is computed in a similar
way as above, where it should be noted that there are now eight possibilities
for the orientations of the three edges between $v,w,x$ and a neighbor common to all of them. 

Using the binomial series $(1+z)^\alpha=\sum_{k=0}^\infty\binom{\alpha}{k}z^k$
and $\tfrac{1}{\C^\alpha}=o(1/r)$ for $\alpha>1$, we get
\begin{align*}
	&\bP[v,w,x \text{ received the same color and stay colored}]\\
	&= \frac{1}{\C^2} \left(1- \frac{1}{2\C} \right)^{k_1}\left(1- \frac{3}{4\C} \right)^{k_2} 
	\left(1-  \frac{7}{8\C} \right)^{k_3}\\
	&= \frac{1}{\C^2} \left(1- \frac{3}{2\C} + o(1/r) \right)^{k_1/3}\left(1- \frac{9}{8\C}
+o(1/r) \right)^{2k_2/3} 
	\left(1- \frac{7}{8C} \right)^{k_3}\\
	&\leq (1+o(1))\frac{1}{(1-\gamma)^2r^2} \left(1- \frac{7}{8(1-\gamma)r} \right)^{k_1/3+2k_2/3+k_3}\\
	&= (1+o(1)) \frac{1}{(1-\gamma)^2r^2}e^{- \frac{7}{8(1-\gamma)}}.
\end{align*}

By using that every graph with $\delta \binom{r}{2}$ edges 
contains at most $\frac{\delta^{3/2}r^3}{6}$ many distinct triangles
(see, for instance, Rivin~\cite{Riv02}),
we obtain
\begin{align*}
	\bE[T_u]&\leq (1+o(1)) \frac{\delta^{3/2}r^3}{6}\frac{1}{(1-\gamma)^2r^2}e^{- \frac{7}{8(1-\gamma)}}\\
	&=(1+o(1))\frac{\delta^{3/2}r}{6(1-\gamma)^2}e^{- \frac{7}{8(1-\gamma)}}.
\end{align*}

By the inclusion--exclusion principle,
we save at least  $P_u-T_u$ many colors in $N(u)$,
that is, the number of colored vertices minus the number of colors actually 
used in $N(u)$ is at least $P_u-T_u$. 
Now, if $P_u-T_u\geq\gamma r$, 
then the number of uncolored vertices is smaller than the number of unused colors in $N(u)$.
Thus, if we can show that, with positive probability, 
it holds that $P_u-T_u\geq \gamma r$, then we can color all remaining uncolored 
vertices greedily, which then concludes the proof. 

To show this, 
let $A_u$ be the event that 
$$P_u-T_u\leq 
	\left(1- \frac{1}{\log r}\right) \left(\frac{\delta}{2(1-\gamma)}e^{- \frac{1}{1-\gamma}}
	-\frac{\delta^{3/2}}{6(1-\gamma)^2}e^{- \frac{7}{8(1-\gamma)}}\right)r.
$$
Since both random variables $P_u$ and $T_u$ are highly concentrated (see Lemma \ref{concentration}),
it follows for large $r$ that
\begin{align*}
	\bP\left[A_u\right]\leq O(r^{- \frac{1}{2}\log \log r}).
\end{align*}
Note that the event $A_u$ only depends on $A_{u'}$ if there is some vertex $z$ 
of distance at most~$2$ to both $u$ and $u'$. Thus, $A_u$ is independent of all other 
$A_{u'}$ except for a number of these that is polynomial in~$r$. 
We deduce, therefore, from 
the Lov\'asz Local Lemma that
there is a coloring of the vertices of $G$ such that no $A_u$ holds.
This, however, implies together with~\eqref{gamma condition} that $P_u-T_u\geq\gamma r$,
provided that $r$ is large enough. 
\end{proof}

\section{How to possibly  save more colors}

The factor of $1.93\Delta^2$ of Theorem~\ref{thm: str chr} is still very far from 
the conjectured factor of $1.25\Delta^2$. While it seems doubtful that probabilistic coloring
can ever get very close to $1.25\Delta^2$, there is still some hope that 
with more sophisticated arguments the factor can be improved. For us, two observations fuel this hope.

First, the two steps, the sparsity lemma and the coloring lemma, are completely dissociated. 
That is, the coloring lemma only exploits the sparsity of strong neighborhoods
but uses no structural information whatsoever. Surely, not forgetting that the task 
consists in coloring edges should help!

Second, while the sparsity lemma, Lemma~\ref{density neighborhood} is asymptotically
tight, the conjectured extreme example for the strong coloring 
conjecture, the blow-up of the $5$-cycle, 
has much sparser strong neighborhoods. 

To be more concrete, consider the blow-up in which every vertex of the $5$-cycle is 
replaced by a stable set of size~$k$, so that the maximum degree becomes $\Delta=2k$. 
Then every edge $e$ has as strong neighborhood all of the rest of the graph, and 
consequently, the strong neighborhood induces about $\tfrac{25}{32}\Delta^4$ 
edges in the square of the linegraph, which is much less than the $\tfrac{3}{2}\Delta^4$
of  Lemma~\ref{density neighborhood}. In some sense, this is not surprising because
already the degree $\sdeg(e)$ of $e$ in the square of the linegraph is much smaller than 
the $2\Delta^2$ we are working with, namely it is only about $\tfrac{5}{4}\Delta^2$. 
In conclusion, the blow-up of the $5$-cycle is quite different from 
what we assume in the sparsity lemma. 

What kind of effects could be at work  that explain this difference?
In  Lemma~\ref{density neighborhood} we assume that the edge $e$ has  
degree $\sdeg(e)$ close to $2\Delta^2$ and at the same time a very dense neighborhood. While it is 
possible to have such edges, this cannot be the case for all edges. Indeed, 
for the strong neighborhood $\sne_e$ to induce many edges, there have 
to be many $4$-cycles in $\sne_e$. (This is simply because $X$ and $Y$, the first
and second neighborhoods of the endvertices of $e$, cannot be very large so 
that the bipartite graph between $X$ and $Y$ is very dense.) 
However, every $4$-cycle reduces in $L^2(G)$ the degree of any of its edges by~$1$,
and consequently, there should be many edges $f$ in $\sne_e$ of lower degree $\sdeg(f)$
than  $2\Delta^2$. 

Obviously, an edge $f$ of low degree $\sdeg(f)$ is to our advantage, as we can 
always color $f$ at the very end when all high degree edges are already colored. 
But if we defer coloring of low degree edges then any high degree edge $e$ with many 
low degree edges in its strong neighborhood has, morally, low degree as well.
Unfortunately, we did not manage to exploit these observations in such a way
that they result in a substantial improvement.

\section{Talagrand's inequality and exceptional outcomes}\label{sec: talagrand}

To finish the proof of Lemma~\ref{coloring lemma}, we still need to show
that the probability of $P_u$ or $T_u$ deviating significantly 
from their expected values is very small. To prove 
that a random variable on a product probability space is strongly concentrated around
its expectation is a very common task, and consequently, 
 a number of powerful tools have been developed for this, among them 
McDiarmid's, Azuma's or Talagrand's inequality~\cite{McD89,Azu67,Tal95}. 
All of these tools have in common
that they require the random variable to be somewhat smooth.

Consider a family of probability spaces $((\Omega_i,\Sigma_i,\bP_i))_{i=1}^n$,
and let $(\Omega,\Sigma,\bP)$ be their product. 
One common smoothness assumption 
for a random variable $X:\Omega\to\mathbb R$ is that
\emph{each coordinate has effect at most~$c$}: 
whenever any two $\omega,\omega'\in\Omega$ differ in exactly one coordinate
then  $|X(\omega)-X(\omega')|\leq c$. 

For McDiarmid's or Azuma's inequality to give strong concentration in this 
situation, the effect~$c$ has to be small compared to $n$. If that is not the 
case, then Talagrand's inequality might still be useful. 
We describe a weaker version that is easier to apply than the full inequality. 

We say that $X$ 
\emph{has certificates of size $s$ for exceeding value $k$} if for any 
$\omega\in\Omega$ with $X(\omega)\geq k$, there is a set $I$ of at most $s$
coordinates such that also $X(\omega')\geq k$ for any $\omega'\in\Omega$ 
with $\omega|_I=\omega'|_I$.
The following version of Talagrand's inequality appears in Molloy and Reed~\cite[p.~234]{MR02}:

\begin{theorem}[Talagrand]
Let $((\Omega_i,\Sigma_i,\bP_i))_{i=1}^n$ be probability spaces, and let $(\Omega,\Sigma,\bP)$ be their product space.
Let $X:\Omega\to\mathbb R$ 
be a non-negative random variable with $X\neq 0$ 
so that each coordinate has effect at most $c$, and assume 
$X$ to have, for any $k$, certificates of size at most $k\ell$ for exceeding $k$. 
Then for any $0\leq t\leq\bE[X]$: \[
\bP\left[|X-\bE[X]|>t+60c\sqrt{\ell\bE[X]}\right] \leq 4e^{-\frac{t^2}{8c^2\ell\bE[X]}}.
\] 
\end{theorem}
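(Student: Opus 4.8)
The plan is to deduce this from Talagrand's original isoperimetric inequality for the convex distance, exactly as Molloy and Reed do in \cite{MR02}. Recall that for the product space $\Omega=\prod_{i=1}^n\Omega_i$, a set $A\subseteq\Omega$ and a point $\omega\in\Omega$ one defines the convex distance
\[
d_T(A,\omega)=\sup_{\substack{\alpha\in\mathbb{R}_{\ge 0}^n\\ \|\alpha\|_2\le 1}}\ \inf_{\omega'\in A}\ \sum_{i:\,\omega_i\neq\omega'_i}\alpha_i,
\]
and that Talagrand's inequality states $\bP[A]\cdot\bP[d_T(A,\cdot)\ge\lambda]\le e^{-\lambda^2/4}$ for all $\lambda\ge 0$. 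The role of the two hypotheses on $X$ --- bounded coordinate effect and small certificates for exceeding a value --- is precisely to turn the statement ``$X(\omega)$ is large'' into ``$\omega$ is far, in the convex distance, from the set where $X$ is small''.

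The heart of the argument is the following claim: if $X(\omega)\ge k$ and $A=\{X\le k-c\lambda\sqrt{k\ell}\}$, then $d_T(A,\omega)\ge\lambda$. To prove it, fix a certificate $I$ with $|I|\le k\ell$ witnessing $X(\omega)\ge k$, and let $\alpha$ be $|I|^{-1/2}$ on the coordinates of $I$ and $0$ elsewhere, so that $\|\alpha\|_2=1$. Given any $\omega'\in A$, put $J=\{i\in I:\omega_i\neq\omega'_i\}$ and let $\omega''$ be obtained from $\omega'$ by resetting each coordinate in $J$ to its value in $\omega$. Then $\omega''$ agrees with $\omega$ on all of $I$, so the certificate forces $X(\omega'')\ge k$; and $\omega''$ differs from $\omega'$ in the $|J|$ coordinates of $J$ only, so the bounded-effect hypothesis gives $X(\omega')\ge X(\omega'')-c|J|\ge k-c|J|$. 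Since $\omega'\in A$ we have $X(\omega')\le k-c\lambda\sqrt{k\ell}$, hence $|J|\ge\lambda\sqrt{k\ell}$ and therefore $\sum_{i:\,\omega_i\neq\omega'_i}\alpha_i\ge|J|/\sqrt{|I|}\ge\lambda$. Taking the infimum over $\omega'\in A$ proves the claim, and combining it with Talagrand's inequality yields, for all $k$ and all $\lambda\ge 0$,
\begin{equation*}
\bP\!\left[X\le k-c\lambda\sqrt{k\ell}\right]\cdot\bP\!\left[X\ge k\right]\le e^{-\lambda^2/4}.\tag{$\star$}
\end{equation*}

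It remains to convert $(\star)$ into a two-sided bound centred at $\bE[X]$. I would first pass through the median $m=\med(X)$. Applying $(\star)$ with $k=m+t$ and $\lambda$ chosen so that $c\lambda\sqrt{(m+t)\ell}=t$, and using $\bP[X\le m]\ge\tfrac12$, gives the upper tail $\bP[X\ge m+t]\le 2e^{-t^2/(4c^2(m+t)\ell)}$; applying $(\star)$ with $k=m$ and $\lambda$ with $c\lambda\sqrt{m\ell}=t$, and using $\bP[X\ge m]\ge\tfrac12$, gives the lower tail $\bP[X\le m-t]\le 2e^{-t^2/(4c^2 m\ell)}$. Integrating these tail bounds shows that $|m-\bE[X]|$ is at most a constant multiple of $c\sqrt{\ell\bE[X]}$; feeding this back, replacing $m$ by $\bE[X]$ everywhere, restricting to the stated range $0\le t\le\bE[X]$ (so that $m+t$ stays within a bounded factor of $\bE[X]$ and all the square-root factors are comparable to $\sqrt{\ell\bE[X]}$), and being deliberately wasteful with the numerical constants produces the extra slack $60c\sqrt{\ell\bE[X]}$, the exponent $-t^2/(8c^2\ell\bE[X])$, and the leading factor $4$ via a union bound over the two tails.

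The main obstacle is the bookkeeping. In the proof of $(\star)$ one must be careful that the modification $\omega'\mapsto\omega''$ only touches coordinates inside the certificate set (so the bounded-effect bound applies cleanly) and that the prescribed certificate size $k\ell$ may be non-integral. The more delicate part, though, is the last step: simultaneously controlling $m$ versus $\bE[X]$, keeping every square-root factor comparable to $\sqrt{\ell\bE[X]}$ on the range $t\le\bE[X]$, and tracking the constants tightly enough to land on exactly the stated $60$, $8$ and $4$.
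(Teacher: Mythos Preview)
The paper does not actually prove this theorem: it is quoted verbatim as a known result from Molloy and Reed~\cite[p.~234]{MR02} and used only as background before the paper develops its own variant (Lemma~\ref{lem: talagrand excep1} and Theorem~\ref{thm: talagrand nice}) that allows an exceptional set~$\xom$. So there is no ``paper's own proof'' to compare against here.

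That said, your sketch is the standard derivation, essentially the one in~\cite{MR02}, and it is correct in outline. The key inequality~$(\star)$ is exactly right: the certificate set $I$ supplies the weight vector $\alpha=|I|^{-1/2}\mathbbm{1}_I$, and the bounded-effect hypothesis converts the Hamming distance on $I$ into a bound on $X(\omega')$. The paper's own Lemma~\ref{lem: talagrand excep1} uses the same mechanism (choose $\alpha=\tfrac{1}{\sqrt{|I|}}\mathbbm{1}_I$, then apply Theorem~\ref{thm: talagrand}), so your argument is fully aligned with the techniques the paper itself employs for its generalised version. Your caveats about the final bookkeeping --- pinning down the constants $60$, $8$, $4$ after replacing the median by the expectation --- are accurate; that step is routine but fiddly, and the paper, for its own variant, carries it out explicitly in Lemma~\ref{lem: med exp}.
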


Unfortunately, for the random variables $P_u$ and $T_u$ that are of interest for us, 
the effect $c$ is too large, so that Talagrand's inequality becomes useless. 
Indeed, in an extreme case 
changing the color of a single vertex $v\in N(u)$ to new color~$\lambda$
 might
result in all other vertices in $N(u)$ of color~$\lambda$ to lose their color.  This 
happens when all these vertices  have an edge pointing away from $v$.
Then, all pairs of color~$\lambda$ counted in $P_u$ are lost, and these
might be up to $r^2$. (Recall that  our graph $G$ is $r$-regular.)
Consequently, the effect $c$ has to be at least $r^2$ -- however, already an 
effect of $c\approx r$ would necessitate a deviation of 
$t\approx r\approx\bE[P_u]$ for the probability
to become small. But for Talagrand's inequality to be useful for us, we need a vanishing probability
for deviations $t$ that are small compared to $\bE[P_u]$.

Changing a single color might have a very large effect but this is a rare exception. 
Normally, only few vertices in $N(u)$ have the same color, so that also only few
are affected by any color change. That is, very large effects only occur with a very 
tiny probability. It seems unreasonable that exceedingly unlikely events should  have
a serious  impact on whether a random variable is concentrated or not.

What we need, therefore, is  a version of Talagrand's inequality
that excludes a very unlikely set $\xom$ of \emph{exceptional} outcomes
that nevertheless spoils smoothness. 
Warnke~\cite{War12} (but see also Kutin~\cite{Kut02}) extended McDiarmid's inequality 
in a similar direction by considering a sort of \emph{typical} effect. 
However, Warnke's inequality is still too weak for us. 
Grable~\cite{Gra98} as well presents a concentration inequality that excludes  exceptional 
outcomes, which would be suitable for our purposes, were it not for the fact that 
there is a serious error in its proof. 
McDiarmid~\cite{McD02}, too, describes a Talagrand-type inequality that excludes
an exceptional set. (Its main feature, though, is to allow permutations as coordinates.)
The inequality, however, does not seem to be of much use to us either.

We mention that the powerful, but technical, method of Kim and Vu~\cite{KV00} can also 
handle large but unlikely effects.

\medskip
To deal with exceptional outcomes, we modify the definition of certificates.
Given an exceptional set $\xom\subseteq\Omega$
and $s,c>0$, we say that \emph{$X$ has upward $(s,c)$-certificates} if for every $t>0$ and
for every $\omega\in\Omega\sm\xom$  
there is an index set $I$ of size at most $s$ so that 
$X(\omega')>X(\omega)-t$ for any $\omega'\in\Omega\sm\xom$ 
for which the 
restrictions $\omega|_I$ and ${\omega'}|_I$ differ in less than $t/c$ coordinates. 

Directly,
Talagrand's inequality does not give concentration around the expectation 
but around the \emph{median} $\med(X)$ of $X$, that is, around
\begin{align*}
	\med(X)=\sup\left\{t\in\mathbb R: \bP[X\leq t]\leq \frac{1}{2}\right\}.
\end{align*}
However, in typical applications the median is very close to the expected value. 
We will deal with this technicality later.
\begin{lemma}\label{lem: talagrand excep1}
Let $((\Omega_i,\Sigma_i,\bP_i))_{i=1}^n$ be probability spaces, and let
$(\Omega,\Sigma,\bP)$ be their product space.
Let $\xom\subseteq \Omega$ be a set exceptional events.
Let $X:\Omega\to\mathbb R$ be a random variable and
let $t\geq 0$.

If $X$ has upward $(s,c)$-certificates  then 
\begin{align}\label{bothbounds}
	\bP[|X- \med(X)|\geq t] \leq 4e^{-\frac{t^2}{4c^2s}}+4\bP[\xom].
\end{align}
\end{lemma}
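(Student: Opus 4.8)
The tool is Talagrand's inequality in its convex-distance form (see \cite{Tal95} or \cite{MR02}): for the product space $(\Omega,\Sigma,\bP)$, any measurable $A\subseteq\Omega$ and any $\lambda\ge 0$,
\[
\bP[A]\cdot\bP\big[d_T(\cdot,A)\ge\lambda\big]\ \le\ e^{-\lambda^2/4},
\qquad\text{where}\qquad
d_T(\omega,A)=\sup_{\alpha\ge 0,\ \|\alpha\|_2\le 1}\ \inf_{\omega'\in A}\ \sum_{i:\,\omega_i\ne\omega_i'}\alpha_i
\]
is the convex (Talagrand) distance. The plan is to feed Talagrand's inequality with suitable ``low'' sets and to read off the needed convex-distance lower bounds directly from the upward-certificate hypothesis, with $\xom$ simply deleted wherever it would otherwise spoil the argument.

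Concretely, I would first establish the one-sided estimate: for all reals $a<b$, writing $A_a:=\{X\le a\}\sm\xom$,
\[
\bP[A_a]\cdot\bP\big[\{X\ge b\}\sm\xom\big]\ \le\ e^{-(b-a)^2/(4c^2s)}.
\]
By Talagrand's inequality it suffices to show that every $\omega\in\Omega\sm\xom$ with $X(\omega)\ge b$ has $d_T(\omega,A_a)\ge(b-a)/(c\sqrt s)$. Apply the definition of upward $(s,c)$-certificates to $\omega$ with parameter $t:=b-a>0$: there is an index set $I$, $|I|\le s$, with $X(\omega')>X(\omega)-(b-a)\ge a$ for every $\omega'\in\Omega\sm\xom$ whose restriction to $I$ differs from $\omega|_I$ in fewer than $(b-a)/c$ coordinates. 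Every $\omega'\in A_a$ lies outside $\xom$ and has $X(\omega')\le a$, so it must disagree with $\omega$ in at least $(b-a)/c$ of the coordinates in $I$. Taking $\alpha$ to be the vector equal to $|I|^{-1/2}$ on $I$ and $0$ elsewhere then gives $\sum_{i:\omega_i\ne\omega_i'}\alpha_i\ge(b-a)/(c\sqrt{|I|})\ge(b-a)/(c\sqrt s)$ uniformly over $\omega'\in A_a$, which is the claim.

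Next I would specialise to $m:=\med(X)$, using the elementary properties $\bP[X\le m]\ge\tfrac12$ and $\bP[X\ge m]\ge\tfrac12$ of the median. Choosing $(a,b)=(m,m+t)$ and noting $\bP[A_m]\ge\tfrac12-\bP[\xom]$ bounds the upper tail; choosing $(a,b)=(m-t,m)$ and noting $\bP[\{X\ge m\}\sm\xom]\ge\tfrac12-\bP[\xom]$ bounds the lower tail, since $\{X\le m-t\}\sm\xom=A_{m-t}$. Each of $\bP[X\ge m+t]$ and $\bP[X\le m-t]$ is thus at most $\bP[\xom]+e^{-t^2/(4c^2s)}/(\tfrac12-\bP[\xom])$. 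Adding these, and treating separately the trivial case $\bP[\xom]\ge\tfrac14$ (where the right-hand side of \eqref{bothbounds} already exceeds $1$) and the case $\bP[\xom]<\tfrac14$ (where $\tfrac12-\bP[\xom]>\tfrac14$), yields \eqref{bothbounds}. For $t=0$ the bound is vacuous, so we may assume $t>0$, and if some $A_a$ occurring here is empty then $\bP[X\le a]\le\bP[\xom]$, forcing $\bP[\xom]\ge\tfrac12$, which is again covered by the trivial case.

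The one point that genuinely needs care — and the step I expect to be the crux — is obtaining \emph{both} tails from the purely \emph{one-sided} (upward) certificate hypothesis. The resolution is that in either direction the quantity one must control is the convex distance from an outcome on which $X$ is \emph{large} to a set on which $X$ is \emph{small}: for the upper tail the large-$X$ outcome is the one whose probability we are bounding, while for the lower tail it is a generic outcome of the high-probability event $\{X\ge m\}$, which exhibits the low set $\{X\le m-t\}\sm\xom$ as ``far away''; in both cases it is precisely the upward certificate of that large-$X$ outcome that supplies the bound. Beyond this, the exceptional set enters only through the forced deletion of $\xom$ from $A_a$ (and from the complementary event handed to Talagrand), which is exactly what produces the additive $\bP[\xom]$ terms and the slightly lossy constants.
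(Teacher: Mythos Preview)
Your approach is essentially identical to the paper's: you use the same sets $A=\{X\le m-t\}\sm\xom$ and $B=\{X\ge m\}\sm\xom$ (and the analogous pair for the upper tail), the same application of Talagrand's convex-distance inequality, and the same argument that the upward certificate at any $\omega\in B$ forces every $\omega'\in A$ to disagree in at least $t/c$ coordinates on an index set of size at most $s$, hence $d_T(\omega,A)\ge t/(c\sqrt s)$. Your observation that the single upward-certificate hypothesis suffices for \emph{both} tails is exactly the point the paper exploits.

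There is one minor slip in the arithmetic at the end. After dividing by $\tfrac12-\bP[\xom]$ you obtain, for $\bP[\xom]<\tfrac14$, a two-sided bound of at most $2\bP[\xom]+8e^{-t^2/(4c^2s)}$, which is not in general $\le 4e^{-t^2/(4c^2s)}+4\bP[\xom]$ (take, e.g., $\bP[\xom]$ near $0$ and $t$ small). The paper sidesteps this by bounding the \emph{product} before dividing: writing $\{X\le m-t\}=A\cup(\{X\le m-t\}\cap\xom)$ and $\{X\ge m\}=B\cup(\{X\ge m\}\cap\xom)$ and using that the two tails are disjoint, one gets
\[
\bP[X\le m-t]\cdot\tfrac12\ \le\ \bP[X\le m-t]\,\bP[X\ge m]\ \le\ \bP[A]\bP[B]+\bP[\xom]\ \le\ e^{-t^2/(4c^2s)}+\bP[\xom],
\]
which yields the one-sided bound $2e^{-t^2/(4c^2s)}+2\bP[\xom]$ and hence the stated constants. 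Apart from this small bookkeeping fix, your proof is the paper's proof.
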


We prove Lemma~\ref{lem: talagrand excep1} with the original, full version 
of Talagrand's inequality.
Recall that the \emph{Hamming distance} of two points $\omega,\omega'\in \Omega$
is defined by the number of non-common coordinates or equally $\sum_{i:\omega_i\not=\omega_i'}1$.
For a weighted version (together with a normalization), let $\alpha\in \mathbb R^n$ be a unit vector with $\alpha_i\geq 0$.
We define the \emph{$\alpha$-Hamming distance} between $\omega$ and $\omega'$ by $\sum_{i:\omega_i\not=\omega_i'}\alpha_i$.

For a set $A\in \Omega$ and a point $\omega\in \Omega$,
let 
$$d(\omega,A)=\sup_{\alpha}\left\{\tau:\sum_{i:\omega_i\not=\omega_i'}\alpha_i\geq \tau \text{ for all } \omega'\in A\right\},$$
which is equivalent to the largest value $\tau$
such that all points in $A$ have $\alpha$-Hamming distance at least $\tau$ to $\omega$ 
(for a best possible choice of $\alpha$).

\begin{theorem}[Talagrand \cite{Tal95}]\label{thm: talagrand}
Let $((\Omega_i,\Sigma_i,\bP_i))_{i=1}^n$ be probability spaces, and 
let $(\Omega,\Sigma,\bP)$ be their product space.
If $A,B \subseteq \Omega$ are two (measurable) sets
such that $d(\omega,A)\geq \tau$ for all $\omega\in B$,
then
\begin{align*}
	\bP[A]\bP[B] \leq e^{-\tau^2/4}.
\end{align*}
\end{theorem}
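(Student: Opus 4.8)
The plan is to deduce the stated inequality from the single \emph{entropy-type} estimate
\begin{equation}\label{eq:tala-entropy}
\int_\Omega e^{d(\omega,A)^2/4}\,d\bP(\omega)\ \le\ \frac{1}{\bP[A]},
\end{equation}
valid for every nonempty measurable $A$. Granting \eqref{eq:tala-entropy}, the theorem is immediate: since $\tau\ge 0$ and $d(\omega,A)\ge\tau$ for all $\omega\in B$, Markov's inequality applied to the nonnegative variable $e^{d(\cdot,A)^2/4}$ gives $\bP[B]\le\bP[d(\cdot,A)\ge\tau]=\bP[e^{d(\cdot,A)^2/4}\ge e^{\tau^2/4}]\le e^{-\tau^2/4}\,\bE[e^{d(\cdot,A)^2/4}]\le e^{-\tau^2/4}/\bP[A]$, i.e.\ $\bP[A]\bP[B]\le e^{-\tau^2/4}$ (the case $A=\emptyset$ being trivial). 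So all the work is in \eqref{eq:tala-entropy}.

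Before the induction I would record a convex reformulation of the convex distance. For $\omega\in\Omega$ let $U_A(\omega)\subseteq\{0,1\}^n$ be the finite set of vectors $u_x$, $x\in A$, where $u_x$ is the $0/1$ indicator vector of the coordinate set $\{i:\omega_i\ne x_i\}$. A separating-hyperplane argument identifies $d(\omega,A)$ with the Euclidean distance from the origin to the polytope $K:=\mathrm{conv}\,U_A(\omega)$: if $p$ is the point of $K$ nearest $0$, then $p\in K\subseteq\mathbb R^n_{\ge 0}$, the projection inequality gives $\langle v,p\rangle\ge\|p\|^2$ for all $v\in K$, so $\alpha=p/\|p\|$ is an admissible unit vector with $\inf_{v\in K}\langle\alpha,v\rangle\ge\|p\|$, while conversely any admissible $\alpha$ has $\inf_{v\in K}\langle\alpha,v\rangle\le\langle\alpha,p\rangle\le\|p\|$ by Cauchy--Schwarz. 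The point of this is that to \emph{upper} bound $d(\omega,A)$ it now suffices to exhibit a single low-norm point of $K$.

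Now \eqref{eq:tala-entropy} is proved by induction on $n$. For $n=1$, $d(\cdot,A)$ is $0$ on $A$ and $1$ off $A$, so \eqref{eq:tala-entropy} reduces to the elementary fact $a^2(1-e^{1/4})+a e^{1/4}\le 1$ on $[0,1]$ with $a=\bP[A]$ (a downward parabola with value $1$ at $a=1$). For $n>1$ write $\Omega=\Omega'\times\Omega_n$, $\bP=\bP'\times\bP_n$, and for $\omega=(\omega',\omega_n)$ put $A_{\omega_n}=\{y':(y',\omega_n)\in A\}$ and $\widehat A=\{y':(y',y_n)\in A\text{ for some }y_n\}$ in $\Omega'=\prod_{i<n}\Omega_i$. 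The geometric core is the inequality
\begin{equation}\label{eq:tala-geo}
d(\omega,A)^2\le\lambda\,d(\omega',A_{\omega_n})^2+(1-\lambda)\,d(\omega',\widehat A)^2+(1-\lambda)^2,\qquad 0\le\lambda\le 1,
\end{equation}
to be read at $\lambda=0$ when $A_{\omega_n}=\emptyset$: letting $p',q'$ be the origin-nearest points of $\mathrm{conv}\,U_{A_{\omega_n}}(\omega')$ and $\mathrm{conv}\,U_{\widehat A}(\omega')$, one checks that $(p',0)\in\mathrm{conv}\,U_A(\omega)$ and that a point $(q',1-\theta)$ with some $\theta\in[0,1]$ also lies in $\mathrm{conv}\,U_A(\omega)$ (the $\theta$ absorbing the part of the representation of $q'$ coming from realizers that reach $A$ only through the fibre over $\omega_n$), then interpolates $(p',0)$ and $(q',1-\theta)$ with weights $\lambda,1-\lambda$ and invokes convexity of $\|\cdot\|^2$ together with $(1-\theta)^2\le1$. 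With \eqref{eq:tala-geo} in hand, fix $\omega_n$, exponentiate, factor out $e^{(1-\lambda)^2/4}$, and apply H\"older's inequality with exponents $1/\lambda$ and $1/(1-\lambda)$ to bound the $\omega'$-integral by $\big(\int_{\Omega'}e^{d(\omega',A_{\omega_n})^2/4}d\bP'\big)^{\lambda}\big(\int_{\Omega'}e^{d(\omega',\widehat A)^2/4}d\bP'\big)^{1-\lambda}$, which by the induction hypothesis is at most $\bP'[A_{\omega_n}]^{-\lambda}\bP'[\widehat A]^{-(1-\lambda)}$. Writing $r(\omega_n)=\bP'[A_{\omega_n}]/\bP'[\widehat A]\in[0,1]$ and using $\min_{\lambda\in[0,1]}e^{(1-\lambda)^2/4}r^{-\lambda}\le 2-r$ gives $\int_{\Omega'}e^{d((\omega',\omega_n),A)^2/4}\,d\bP'(\omega')\le(2-r(\omega_n))/\bP'[\widehat A]$; integrating in $\omega_n$ and using Fubini ($\int_{\Omega_n}r(\omega_n)\,d\bP_n=\bP[A]/\bP'[\widehat A]$) yields $\bP'[\widehat A]^{-1}\big(2-\bP[A]/\bP'[\widehat A]\big)$, which is $\le 1/\bP[A]$ since $2t-t^2\le 1$ for $t=\bP[A]/\bP'[\widehat A]\le 1$. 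This establishes \eqref{eq:tala-entropy}, hence the theorem.

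I expect the main obstacle to be \eqref{eq:tala-geo}: making it rigorous requires tracking how each disagreement vector of $\omega$ with $A$ decomposes according to whether its last coordinate is $0$ or $1$, and the ``$(q',1-\theta)$'' bookkeeping is exactly the delicate point. The two one-variable estimates also need care --- the $n=1$ parabola inequality and the optimization $\min_\lambda e^{(1-\lambda)^2/4}r^{-\lambda}\le 2-r$, which splits into the cases $r\ge e^{-1/2}$ (where $\lambda=1+2\ln r$ is the right choice) and $r<e^{-1/2}$ (where $\lambda=0$ already works, using $e^{1/4}<2$) --- but beyond these, only H\"older, Fubini, and Markov are used.
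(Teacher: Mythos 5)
The paper does not prove this theorem: it is stated with a citation to Talagrand's 1995 memoir and used as a black box in Section~\ref{sec: talagrand} to derive Lemma~\ref{lem: talagrand excep1}. So there is no paper proof to compare your argument against, and your task here is really to reprove a cited classical result.

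With that caveat, your sketch is essentially the standard proof of Talagrand's convex distance inequality, the one in Talagrand's original paper and reproduced, for instance, in Alon--Spencer or in the Molloy--Reed book. All the pieces are in the right place and correct: the reduction to the exponential moment bound $\int_\Omega e^{d(\cdot,A)^2/4}\,d\bP\le 1/\bP[A]$ via Markov; the identification (projection onto the convex hull plus Cauchy--Schwarz) of $d(\omega,A)$ with the Euclidean distance from the origin to $\mathrm{conv}\,U_A(\omega)$; the $n=1$ base case, where $d(\cdot,A)\in\{0,1\}$ and one checks a downward parabola whose maximum on $[0,1]$ is attained at $a=1$ with value $1$; the fibre decomposition into $A_{\omega_n}$ and $\widehat A$ together with the convexity bound on $d(\omega,A)^2$, noting that both $(p',0)$ and some $(q',1-\theta)$ with $\theta\in[0,1]$ lie in $\mathrm{conv}\,U_A(\omega)$ and that $(1-\theta)^2\le 1$; H\"older with exponents $1/\lambda$ and $1/(1-\lambda)$; Fubini to turn $\int_{\Omega_n}\bP'[A_{\omega_n}]\,d\bP_n$ into $\bP[A]$; and finally $2t-t^2\le 1$. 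The two one-variable estimates you flag are genuine obligations but standard: $\min_{\lambda\in[0,1]}e^{(1-\lambda)^2/4}r^{-\lambda}\le 2-r$ for $r\in[0,1]$ is exactly Talagrand's auxiliary lemma, optimized at $\lambda=1+2\ln r$ when $r\ge e^{-1/2}$ and at $\lambda=0$ otherwise (where $e^{1/4}<2-e^{-1/2}$ suffices). Nothing is wrong; completing the proof only requires writing out those two elementary inequalities and the $(q',1-\theta)$ bookkeeping you already describe.
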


\begin{proof}[Proof of Lemma~\ref{lem: talagrand excep1}]
The two-sided estimation~\eqref{bothbounds} follows from
the two one-sided estimations
\begin{align*} 
\bP[X\leq\med(X)- t] &\leq 2e^{-\frac{t^2}{4c^2s}}+2\bP[\xom]\\
\bP[X\geq\med(X)+ t] &\leq 2e^{-\frac{t^2}{4c^2s}}+2\bP[\xom],
\end{align*}
of which we only show the first; 
the argumentation for the second is analogous.
Let 
\begin{align*}
A&=\{\omega\in \Omega\sm \xom: X(\omega)\leq \med(X)-t\}\text{ and}\\
B&=	\{\omega\in \Omega\sm \xom: X(\omega)\geq \med(X)\}.
\end{align*}
Pick an arbitrary $\omega\in B$. 
By assumption, $X$ has upward $(s,c)$-certificates, which means, in particular, 
that there is an index set $I$ of at most $s$ indices such that $\omega|_I$
and $\omega'|_I$ differ in at least $t/c$ coordinates for every $\omega'\in A$.
Consequently, if we set  $\alpha=1/{\sqrt{|I|}}\cdot\mathbbm{1}_{I}$, 
where $\mathbbm{1}_{I}$ is the characteristic vector of $I$, then 
 $\omega$ and $\omega'$ have $\alpha$-Hamming distance   at least $\frac{t}{c\sqrt{s}}$.
Hence $d(\omega,A)\geq \frac{t}{c\sqrt{s}}$.

Using Theorem \ref{thm: talagrand},
we obtain $\bP[A]\bP[B]\leq e^{-t^2/4c^2s}$.
We conclude:
\begin{align*}
	\bP[X\leq \med(X)-t]\cdot \frac{1}{2}
	&\leq\bP[X\leq \med(X)-t]\bP[X\geq \med(X)]\\
	&\leq \bP[A]\bP[B] + \bP[\xom]\\
        &\leq e^{-t^2/4c^2s}+ \bP[\xom].
\end{align*}
This completes the proof.
\end{proof}

Next, we prove that usually the median is close to the expected value:
\begin{lemma}\label{lem: med exp}
Let $((\Omega_i,\Sigma_i,\bP_i))_{i=1}^n$ be probability spaces, and 
let $(\Omega,\Sigma,\bP)$ be their product space.
Let $\xom\subseteq \Omega$.
Let $X:\Omega\to\mathbb R$ be a random variable, let $M=\max\{\sup |X|,1\}$, and let $c\geq1 $.
If  $X$ has upward $(s,c)$-certificates,  
then
\begin{align*}
	|\bE[X]-\med(X)|\leq 20 c\sqrt{s}+20M^2 \bP[\xom].
\end{align*}
\end{lemma}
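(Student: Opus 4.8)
The plan is to derive the bound $|\bE[X]-\med(X)|\le 20c\sqrt s+20M^2\bP[\xom]$ by integrating the tail estimate already provided by Lemma~\ref{lem: talagrand excep1}. First I would recall the standard fact that for any real random variable $Y$ one has $|\bE[Y]|\le \int_0^\infty \bP[|Y|\ge t]\,dt$; applying this with $Y=X-\med(X)$ gives
\[
|\bE[X]-\med(X)|\le \int_0^\infty \bP\big[|X-\med(X)|\ge t\big]\,dt .
\]
Now I would split the integral at some threshold $t_0$ (to be chosen of order $c\sqrt s$ up to a logarithmic factor, or perhaps just $t_0=\infty$ on a truncated range since $|X|\le M$). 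On $[0,t_0]$ I bound the integrand by Lemma~\ref{lem: talagrand excep1}, namely $4e^{-t^2/4c^2s}+4\bP[\xom]$. The Gaussian part integrates to a constant times $c\sqrt s$: $\int_0^\infty 4e^{-t^2/4c^2s}\,dt = 4c\sqrt s\cdot\int_0^\infty e^{-u^2/4}du = 4c\sqrt s\cdot\sqrt\pi < 8c\sqrt s$, comfortably inside the claimed $20c\sqrt s$. The contribution $4\bP[\xom]$ is constant in $t$, so it must be integrated only over a bounded range — and this is where the bound $M=\max\{\sup|X|,1\}$ enters.

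The key observation for controlling the $\bP[\xom]$ term is that $|X-\med(X)|\le 2M$ always (since $|\med(X)|\le M$ and $|X|\le M$), so $\bP[|X-\med(X)|\ge t]=0$ for $t>2M$. Hence the integral $\int_0^\infty\bP[|X-\med(X)|\ge t]\,dt$ is really over $[0,2M]$, and the constant term $4\bP[\xom]$ contributes at most $8M\bP[\xom]$. Combined with the Gaussian part this gives $|\bE[X]-\med(X)|\le 8c\sqrt s+8M\bP[\xom]$, which is stronger than what is claimed; the slack (factor $20$ instead of $8$, and $M^2$ instead of $M$) leaves ample room. Actually one subtlety: Lemma~\ref{lem: talagrand excep1} is stated for $t\ge 0$, so the bound applies on the whole range and the argument is clean. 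One should double-check that $\med(X)$ is well-defined and finite, which is immediate since $|X|\le M<\infty$; and that $X$ having upward $(s,c)$-certificates is exactly the hypothesis needed to invoke Lemma~\ref{lem: talagrand excep1}.

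I do not expect a serious obstacle here — the proof is essentially ``integrate the tail bound.'' The only place requiring a little care is making the truncation argument rigorous: one must note that $\med(X)$ lies in $[-M,M]$, which requires $M\ge 1$ (guaranteed by the definition $M=\max\{\sup|X|,1\}$) only to avoid degeneracies, and that the event $\{|X-\med(X)|\ge t\}$ is empty once $t$ exceeds the diameter of the support. If one instead wants to be maximally safe and not rely on $\sup|X|$ being attained, one can replace $2M$ by $2M$ throughout using $|X|\le M$ as a sup bound, which still works since we only need $\bP[|X-\med(X)|\ge t]=0$ for $t>2M$, and $\{|X-\med(X)|>2M\}\subseteq\{|X|>M\}=\varnothing$. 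The generous constants in the statement ($20$, $M^2$) suggest the authors did not optimize, so matching them is routine; I would present the clean version with constant $8$ and simply note it implies the claimed inequality.
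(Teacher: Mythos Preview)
Your argument is correct and follows the same underlying idea as the paper: bound $|\bE[X]-\med(X)|$ by $\bE[|X-\med(X)|]$ and then control the latter via the tail estimate of Lemma~\ref{lem: talagrand excep1}. The paper discretises, writing $\bE[|X-\med(X)|]\le \sum_{k=0}^{K} c\sqrt s\,(k+1)\,\bP[|X-\med(X)|\ge kc\sqrt s]$ with $K=\lfloor 2M/(c\sqrt s)\rfloor$, whereas you use the continuous layer-cake integral $\int_0^{2M}\bP[|X-\med(X)|\ge t]\,dt$. Because of the extra $(k+1)$ weights, the paper's sum of the $\bP[\xom]$ terms picks up a factor $\sum_{k\le K}(k+1)\asymp K^2\asymp M^2/(c^2 s)$, which after multiplying by $c\sqrt s$ yields the $M^2\bP[\xom]$ term; your integral avoids this and gives the sharper $8M\bP[\xom]$. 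So your route is a mild but genuine simplification, with better constants, of the paper's argument.
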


We note that the proof technique is not new. A similar proof appears, for instance, 
in Molloy and Reed~\cite[Ch.~20]{MR02}.

\begin{proof}
Clearly,
\begin{align*}
	|\bE[X]-\med(X)|
	&\leq \bE[|X-\med(X)|].
\end{align*}
Put $K=\lfloor 2M/c\sqrt s\rfloor$, and observe that  % 2M instead of M, if X can be negative 
$|X-\med(X)|< (K+1)c\sqrt s$.
By splitting the possible values of $|X-\med(X)|$ into intervals of length $c\sqrt s$,
we can upper-bound 
\begin{align*}
	\bE[|X-\med(X)|]
	\leq \sum_{k=0}^K c\sqrt{s} (k+1) \bP[|X-\med(X)|\geq kc\sqrt{s}\}].
\end{align*}
We apply  Lemma~\ref{lem: talagrand excep1} for each summand with $t=kc\sqrt{s}$:
\begin{align*}
	\bE[|X-\med(X)|]&\leq c\sqrt{s}\sum_{k=0}^K 4(k+1)(e^{-k^2/4}+\bP[\xom])\\
	&\leq 20 c\sqrt{s}+20M^2 \bP[\xom],  % here we assumed c\sqrt s\geq 1
\end{align*}
as $\sum_{k=0}^\infty (k+1)e^{-k^2/4}\approx 4.1869<5$.
\end{proof}

We conclude under the assumptions as in Lemma~\ref{lem: med exp} by combining Lemma~\ref{lem: talagrand excep1} and~\ref{lem: med exp}:
\begin{align}\label{intermediate}
	\bP[|X- \bE[X]|\geq t+20 c\sqrt{s}+20M^2 \bP[\xom]] \leq 4e^{-\frac{t^2}{4c^2s}}+4\bP[\xom].
\end{align}

%Before we combine Lemmas~\ref{lem: med exp} and~\ref{lem: talagrand excep1},
Let us come back to the certificates. As defined, they are witnesses for large values of $X$. 
Sometimes,
it is easier to certify smaller values. To capture such situations we say that 
\emph{$X$ has downward $(s,c)$-certificates}
if for every  $t>0$, and 
for every $\omega\in\Omega\sm\xom$ 
there is an index set $I$ of size at most $s$ so that $X(\omega')< X(\omega)+t$
for 
every $\omega'\in\Omega\sm\xom$ for which the 
restrictions $\omega|_I$ and ${\omega'}|_I$ differ in less than $t/c$ coordinates. 
By replacing $X$ with $-X$ we  observe that 
Lemmas~\ref{lem: talagrand excep1} and~\ref{lem: med exp}, and thus~\eqref{intermediate}, remain valid 
for downward certificates. 

We simplify~\eqref{intermediate} a bit more.
If $t\geq 50c\sqrt{s}$ and $\bP[\xom]\leq M^{-2}$ then $t\geq t/2+ (20 c\sqrt{s}+20M^2 \bP[\xom])$.
Thus:
%Thus, applying Lemma~\ref{lem: talagrand excep 2} we obtain:\mymargin{now we could also get rid of previous lem}

\begin{theorem}\label{thm: talagrand nice}
Let $((\Omega_i,\Sigma_i,\bP_i))_{i=1}^n$ be probability spaces, $(\Omega,\Sigma,\bP)$ be their product, and
let $\xom\subset \Omega$ be a set of exceptional outcomes.
Let $X:\Omega\to\mathbb R$ be a random variable, let $M=\max\{\sup |X|,1\}$, and let $c\geq 1$.
Suppose $\bP[\xom]\leq M^{-2}$
and $X$ has upward $(s,c)$-certificates or downward $(s,c)$-certificates, then for $t> 50 c\sqrt{s}$
\begin{align*}
	\bP[|X- \bE[X]|\geq t] \leq 4e^{-\frac{t^2}{16c^2s}}+4\bP[\xom].
\end{align*}
\end{theorem}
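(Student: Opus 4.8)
The plan is to show that Theorem~\ref{thm: talagrand nice} is nothing more than a cosmetic repackaging of the already-established inequality~\eqref{intermediate}, so the proof is a two-line estimate rather than a fresh argument. First I would recall the setup: under the hypothesis that $X$ has upward $(s,c)$-certificates (or downward ones, which reduce to the upward case by passing to $-X$, as noted right after Lemma~\ref{lem: med exp}), the combined bound~\eqref{intermediate} states that for every $t\geq 0$
\[
\bP\bigl[\,|X-\bE[X]|\geq t+20c\sqrt{s}+20M^2\bP[\xom]\,\bigr]\leq 4e^{-t^2/(4c^2s)}+4\bP[\xom].
\]
This is the only substantive input; everything else is arithmetic manipulation of the slack term and the exponent.

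Next I would absorb the error term $20c\sqrt{s}+20M^2\bP[\xom]$ into the threshold. Using the standing assumption $\bP[\xom]\leq M^{-2}$ we have $20M^2\bP[\xom]\leq 20$, and since $c\geq 1$ and $s\geq 1$ this is at most $20c\sqrt{s}$; hence $20c\sqrt{s}+20M^2\bP[\xom]\leq 40c\sqrt{s}$. Now suppose $t>50c\sqrt{s}$ and apply~\eqref{intermediate} with the parameter $t$ there replaced by $t/5$ (which is legitimate since $t/5\geq 0$). Because $t/5 > 10c\sqrt{s}$, we get $t/5 + 40c\sqrt{s} < t/5 + 4\cdot(t/5) = t$, so the event $\{|X-\bE[X]|\geq t\}$ is contained in the event $\{|X-\bE[X]|\geq t/5 + 20c\sqrt{s} + 20M^2\bP[\xom]\}$, and therefore
\[
\bP\bigl[\,|X-\bE[X]|\geq t\,\bigr]\leq 4e^{-(t/5)^2/(4c^2s)}+4\bP[\xom] = 4e^{-t^2/(100c^2s)}+4\bP[\xom].
\]
Since $100\geq 16$, replacing $100$ by $16$ in the exponent only weakens the bound, giving exactly the claimed $4e^{-t^2/(16c^2s)}+4\bP[\xom]$. (The precise split of $t$ into $t/5$ plus slack is not important; any split with enough room works, and one can tune the constant $50$ versus $16$ accordingly — I would just pick one clean choice.)

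There is really no main obstacle here: the work was all done in Lemmas~\ref{lem: talagrand excep1} and~\ref{lem: med exp} and in the derivation of~\eqref{intermediate}. The only point requiring a moment's care is bookkeeping the constants so that the stated hypothesis $t>50c\sqrt{s}$ is genuinely strong enough to swallow both $20c\sqrt{s}$ and $20M^2\bP[\xom]$ while still leaving a constant fraction of $t$ in the exponent; the condition $\bP[\xom]\leq M^{-2}$ is exactly what makes the second of these controllable, and $c\geq 1$, $s\geq 1$ handle the comparison between the two slack terms. I would also explicitly invoke the downward-certificate reduction (apply the result to $-X$, noting $\sup|{-X}| = \sup|X|$ so $M$ is unchanged) so that the theorem as stated, covering either type of certificate, is fully justified.
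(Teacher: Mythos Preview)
Your overall strategy is exactly the paper's: invoke~\eqref{intermediate}, absorb the additive slack $20c\sqrt{s}+20M^2\bP[\xom]$ into the deviation threshold, and read off the exponent. The downward-certificate reduction via $-X$ is also handled correctly.

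However, your final step is backwards. You obtain
\[
\bP[|X-\bE[X]|\geq t]\leq 4e^{-t^2/(100c^2s)}+4\bP[\xom],
\]
and then claim that replacing $100$ by $16$ ``only weakens the bound''. In fact $t^2/(16c^2s)>t^2/(100c^2s)$, so $e^{-t^2/(16c^2s)}<e^{-t^2/(100c^2s)}$: the bound with $16$ is \emph{stronger}, not weaker, and therefore does not follow from what you proved. As written, you have established the theorem only with the constant $100$ in place of $16$.

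The fix is to split $t$ in half rather than in fifths, exactly as the paper does: apply~\eqref{intermediate} with parameter $t/2$, so that the exponent becomes $(t/2)^2/(4c^2s)=t^2/(16c^2s)$. For this you need $t/2\geq 20c\sqrt{s}+20M^2\bP[\xom]$. The paper simply asserts this from $t\geq 50c\sqrt{s}$ and $\bP[\xom]\leq M^{-2}$ (so $t/2\geq 25c\sqrt{s}$ and $20M^2\bP[\xom]\leq 20$, leaving $5c\sqrt{s}$ to cover the remaining $20$). Your intermediate estimate $20c\sqrt{s}+20M^2\bP[\xom]\leq 40c\sqrt{s}$ is too generous for the $t/2$ split under the hypothesis $t>50c\sqrt{s}$; you must keep the two slack pieces separate as the paper does, or else accept a larger threshold on $t$.
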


%\mymargin{Is this comment enough as a proof of Theorem~\ref{thm: talagrand nice}?}

%In almost all applications one might have $M\bP[\xom]=o(1)$ and thus the term is negligible.
%Moreover, usually we choose $t$ of larger magnitude than $c\sqrt{s}$  and hence the 
%term $20c\sqrt{s}$
%is also negligible.

\section{Concentration for coloring procedure}\label{sec: proof concentration}

We finally complete the proof of Lemma~\ref{coloring lemma} by proving that 
for every vertex $u$ the random variables $P_u$ and $T_u$ are tightly concentrated around 
their expected values. Recall that $P_u$ counts the number of vertex pairs  in $N(u)$
that at the end of the coloring procedure have the same color, and recall that $T_u$
counts the number of such vertex triples. 

\begin{proof}[Proof of Lemma~\ref{concentration}]
We show the statement for $P_u$, the proof for $T_u$ is almost the same.

Recall that the coloring procedure that defines $P_u$ (and $T_u$) is based on a 
random experiment on a product space $\Omega=\prod \Omega_i$, where the product 
ranges over the vertices (that receive a color) and the edges (that receive a direction)
of the graph. Thus, every event $\omega\in\Omega$ is a vector that is indexed by $V(G)\cup E(G)$. 

To apply Theorem~\ref{thm: talagrand nice}, we define the set $\xom$ of 
exceptional events as the space of events that assign some color to more than $\log r$
vertices in $N(u)$ (before uncoloring). 
Recall that we work with $\C=\lceil (1-\gamma) r\rceil$ colors.
To estimate $\bP[\xom]$ observe first that the probability that 
a particular color appears more than $\log r$ times in $N(u)$ is at most
\[
\sum_{i=\lfloor \log r\rfloor+1}^r\binom{r}{i}\frac{1}{\C^i}\leq 
\sum_{i=\lfloor \log r\rfloor+1}\left(\frac{er}{i}\right)^i\frac{1}{(1-\gamma)^ir^i}
\leq r\cdot \left(\frac{e}{(1-\gamma)\log r}\right)^{\log r}
\]
Thus, we get
\[
\bP[\xom] \leq r^2\cdot \left(\frac{e}{(1-\gamma)\log r}\right)^{\log r}\leq r^{-\frac{2}{3}\log\log r},
\]
for large enough $r$. 

Setting $s=3r$ and $c=\log^2r$, let us check that $P_u$ has downward $(s,c)$-certificates. 
So, let $t>0$ be given and consider an event $\omega\notin\xom$. 
We have to define an index set $I$ of size at most~$s$. 
We start by including all vertices of $N(u)$ in $I$. 
Next, consider any vertex $v\in N(u)$ that 
becomes uncolored in step~$3$ of the coloring procedure. This is only the case, if there 
is a neighbor $v'$ of $v$ that receives the same color under $\omega$ and if, again 
subject to~$\omega$, the edge $vv'$ points towards $v$. We add  $v'$ and $vv'$ 
to $I$ for each such vertex $v$. In total, we have $|I|\leq 3r=s$ as for every vertex $v\in N(u)$ 
we add at most two additional indices to $I$. 

Let now $\omega'\notin\xom$ be an event with $P_u(\omega')\geq  P_u(\omega)+t$. 
As $\omega'\notin\xom$, every color may contribute at most $\binom{\log r}{2}\leq \frac{1}{2} \log^2r$ pairs
to $P_u$. 

For every color $\lambda$
for which there are more pairs of vertices colored with $\lambda$ contributing to $P_u(\omega')$ than to $P_u(\omega)$,
there is a vertex $v$ in $N(u)$ of color $\lambda$ under $\omega'$ but
that, under $\omega$, is either uncolored or colored with a different color than $\lambda$.

In the latter case, $\omega$ and $\omega'$ differ in the coordinate $v\in I$ (as $v$ receives different colors under $\omega$ and $\omega'$).
For the former case, observe that we had added a vertex $v'$ and the edge $vv'$ to $I$ to witness $v$ being uncolored under $\omega$. Either the
color of $v'$ or the direction of $vv'$ must be different in $\omega'$, that is, $\omega$ and $\omega'$ differ again in at least
one coordinate of $I$. In both cases, call such a coordinate a \emph{$\lambda$-difference}. 

Can a coordinate in $I$ be a $\lambda$-difference and a $\mu$-difference for two different colors $\lambda$ and $\mu$?
Yes, but only if it is a vertex $v'\in I$ in $N(u)$ that satisfies two conditions: first,
under $\omega$ it serves as a witness for one of its neighbors $v\in N(u)$
losing its color $\lambda$, say, in the conflict resolution step of the coloring procedure; and second, 
by flipping from color $\lambda$ under $\omega$ to color $\mu$ in $\omega'$ the vertex $v'$ contributes new pairs of color~$\mu$
in $P_u$. This then immediately shows that no coordinate can be a $\lambda$-difference for three (or more) colors. 

As a consequence,  $\omega'$ and $\omega$ must differ in at least 
$t/\log^2r=t/c$ coordinates in $I$ as $P_u(\omega')\geq P_u(\omega)+t$.
(Recall that under $\omega'$ no color can contribute more than $\tfrac{1}{2}\log^2r$ pairs to $P_u$.)
This proves that $P_u$  has downward $(s,c)$-certificates.

For Theorem~\ref{thm: talagrand nice}, we set $M=\sup P_u\leq r^2$.
% and calculate 
%\[
%20 c\sqrt{s}+20M^2 \bP[\xom] \leq 20(\log^2r \sqrt r+r^4\cdot r^{-\frac{2}{3}\log\log r})
%\leq \log^3r \sqrt r,
%\]
%if $r$ is large enough. 
With $t=\log^3r \sqrt r$,
 Theorem~\ref{thm: talagrand nice} implies for large $r$ that 
\begin{align*}
\bP\left[|P_u-\bE[P_u]|\geq\sqrt{r}\log^3 r\,\right] 
&\leq 4\exp\left(-\frac{r\log^6r}{16\log^4 r\cdot 3r}\right) + 4r^{-\frac{2}{3}\log\log r}\\
&\leq r^{-\frac{1}{2}\log\log r}
\end{align*}
The only difference of the proof for $T_u$ lies in the fact that, outside $\xom$, 
every color can contribute up to $\log^3r/6$ triples to $T_u$; for $P_u$
this was at most $\log^2r/2$ pairs. The resulting higher value $\log^3r$ for $c$ 
can easily be compensated for by increasing $t$ to $\log^4r\sqrt r$ in the application of
Theorem~\ref{thm: talagrand nice}.
\end{proof}

\section{Exceptional outcomes spoil triangle counting}\label{sec: triangles}

We close this article by arguing that Theorem~\ref{thm: talagrand nice}
has potential applications beyond our coloring lemma. To make this case, 
we discuss the number of triangles in a random graph. We note that 
this problem also serves as a motivating example for Kim and Vu~\cite{KV00}, 
and for Warnke~\cite{War12}. 

Consider the random graphs $\mathcal G(n,p)$ that are obtained from $K_n$ by deleting uniformly and independently at random an edge with probability $1-p$, where $p=p(n)$
may be a function in $n$. The threshold probability for the triangles is $p=\tfrac{1}{n}$:
below that threshold there is with high probability no triangle, above it there is with high probability at least one. 
Let us now examine the number $T$ of triangles in $\mathcal G(n,p)$,
or rather the expected number of 
triangles $\bE[T]=\binom{n}{3}p^3\approx \tfrac{1}{6}n^3p^3$. 

Is $T$ concentrated 
around its expected value, whenever $np\to\infty$? We consider here a 
relatively mild notion of concentration, where we allow deviations from
the expected value of up to $t=\epsilon n^3p^3$ for small but fixed $\epsilon>0$.
For simpler notation, set $N=\binom{n}{2}$.

For $p$ relatively large, that is, for $p\geq n^{-1/3+ \gamma}$ for any $\gamma>0$, 
McDiarmid's inequality~\cite{McD89} is strong enough to show concentration. 
Indeed, changing any coordinate, that is, any edge, may result in at most
$n$ new triangles (or at most $n$ triangles less), so that the effect $c$
of each coordinate is bounded by $n$. Consequently, for McDiarmid's 
inequality to show that $|T-\bE[T]|\leq t$ for $t=\epsilon n^3p^3$
it is necessary that $\frac{t^2}{N\cdot n^2}$ tends to infinity.
This is the case if $p\geq n^{-1/3+ \gamma}$.
A version of McDiarmid's inequality for binary random variables, 
see~\cite{McD89} again, allows to drop this threshold to $p\geq n^{-2/5+ \gamma}$.

To go below this threshold, Warnke~\cite{War12} (but also others) observed that, while changing 
a single edge may create up to $n-2$ new triangles
(or destroy that many), this is exceedingly unlikely. 
Indeed, we only expect a particular edge to be in roughly $np^2$ many triangles.
Thus, by the standard Chernoff bound, it is, for any $\delta>0$,  extremely unlikely that an edge is contained in more than $\max\{2np^2,n^\delta\}$ triangles.

%Using Warnke's version of McDiarmid's Inequality~\cite{War12}, which is basically McDiarmid's Inequality with the exclusion of an exceptional set of outcomes, 
%it is required that $\frac{t^2}{N\cdot \max\{2np^2,n^\delta\}}$ tends to infinity, so $p\geq n^{-2/3+ \gamma}$.

Exploiting the fact that, typically, the effect of changing a single edge is much smaller,
Warnke could verify concentration as long as $\frac{t^2}{pN\cdot \max\{2np^2,n^\delta\}}$ 
tends to infinity, which is  the case when $p\geq n^{-4/5+ \gamma}$.

To go even below such $p$, Kim and Vu~\cite{KV00} developed a powerful method
that evidently has a very wide scope of application. 
Usually, 
great power does not come for free, and this is also the case here: 
Kim and Vu's inequality is rather technical and not easy to use. 

Let us now apply Theorem~\ref{thm: talagrand nice} and show the strong concentration of $T$ also for  values of $p$ smaller than $n^{-4/5+ \gamma}$.
We exclude all outcomes where at least one edge is contained in more than $n^\delta$ many triangles. As seen above, this is an event of very small probability.
Moreover, we may use Warnke's result (or a previous
application of Theorem~\ref{thm: talagrand nice})
to observe that it is extremely unlikely that $T\geq \tfrac{2}{6}n^3\cdot n^{3(-4/5+ \gamma)}= \frac{1}{3} n^{3/5+ 3\gamma}$. (We have used here that $T$ is monotone in $p$.)
We add all these outcomes to our exceptional set.

Next, let us check that $T$ has upward $(s,c)$-certificates, where 
$s=n^{3/5+ 3\gamma}$ and $c=n^\delta$. For a non-exceptional event $\omega$, 
we use as index set $I$ the set of all edges lying in any triangle. As 
there are at most $\frac{1}{3} n^{3/5+ 3\gamma}$ triangles, it follows that $|I|\leq s$. 
Now, consider some non-exceptional event $\omega'$ such that $\omega$ and $\omega'$
differ in less than $t'/c$ coordinates of $I$. Then, any edge in $I$ that is 
present in $\omega$ but lost in $\omega'$ may only result in $\omega'$ having 
at most $n^\delta$ less triangles than $\omega$. Moreover, edges outside $I$ 
obviously cannot result in the loss of triangles. Therefore, $T(\omega')>T(\omega)-t'$,
and we see that $T$ has upward $(s,c)$-certificates.

With these values of $s$ and $c$, we deduce from Theorem~\ref{thm: talagrand nice}
that $T$ is strongly concentrated if
$p\geq n^{-9/10+ \gamma}$.

For even smaller values of $p$, we may apply Theorem~\ref{thm: talagrand nice} once again, and set $s \approx n^{-9/10+ \gamma}$, which then will yield
concentration for $p\geq n^{-19/20+ \gamma}$.
Of course, this can iterated several times, 
so that we get strong concentration for $p\geq n^{-1+ \beta}$ for every $\beta>0$.

Let us finally point out that we even have fairly tight concentration around 
the expected value: 
namely, $T$ is very likely within the range $\bE[T]\pm n^{\delta}\sqrt{\bE[T]}$.

\section{Acknowledgment}

We both would like to thank Ross Kang for insightful discussions.

\bibliographystyle{amsplain}
\bibliography{litstrchrom}

\providecommand{\bysame}{\leavevmode\hbox to3em{\hrulefill}\thinspace}
\providecommand{\MR}{\relax\ifhmode\unskip\space\fi MR }
% \MRhref is called by the amsart/book/proc definition of \MR.
\providecommand{\MRhref}[2]{%
  \href{http://www.ams.org/mathscinet-getitem?mr=#1}{#2}
}
\providecommand{\href}[2]{#2}
\begin{thebibliography}{10}

\bibitem{AKS99}
N.~Alon, M.~Krivelevich, and B.~Sudakov, \emph{Coloring graphs with sparse
  neighborhoods}, J. Combin. Theory Ser. B \textbf{77} (1999), 73--82.

\bibitem{And92}
L.D. Andersen, \emph{The strong chromatic index of a cubic graph is at
  most~$10$}, Discrete Math. \textbf{108} (1992), 231--252.

\bibitem{Azu67}
K.~Azuma, \emph{Weighted sums of certain dependent random variables}, T\^ohoku
  Math. J. (2) \textbf{19} (1967), 357--367.

\bibitem{CGTT90}
F.R.K. Chung, A.~Gy\'{a}rf\'{a}s, Z.~Tuza, and W.T. Trotter, \emph{The maximum
  number of edges in 2k2-free graphs of bounded degree}, Discrete Math.
  \textbf{81} (1990), 129--135.

\bibitem{Cra06}
D.W. Cranston, \emph{Strong edge-coloring of graphs with maximum degree~$4$
  using~$22$ colors}, Discrete Math. \textbf{306} (2006), 2772--2778.

\bibitem{Die10}
R.~Diestel, \emph{Graph theory}, fourth ed., Springer, Heidelberg, 2010.

\bibitem{FSGT89}
R.J. Faudree, R.H. Schelp, A.~Gy{\'a}rf{\'a}s, and Z.~Tuza, \emph{Induced
  matchings in bipartite graphs}, Discrete Math. \textbf{78} (1989), 83--87.

\bibitem{FSGT90}
\bysame, \emph{The strong chromatic index of graphs}, Ars Combin. \textbf{29}
  (1990), 205--211.

\bibitem{Gra98}
D.A. Grable, \emph{A large deviation inequality for functions of independent,
  multi-way choices}, Combin. Probab. Comput. \textbf{7} (1998), 57--63.

\bibitem{HQT93}
P.~Hor{\'a}k, H.~Qing, and W.T. Trotter, \emph{Induced matchings in cubic
  graphs}, J. Graph Theory \textbf{17} (1993), 151--160.

\bibitem{Joo14}
F.~Joos, \emph{Induced matchings in graphs of bounded maximum degree}, preprint
  at arXiv:1406.2440 (2014).

\bibitem{JRS14}
F.~Joos, D.~Rautenbach, and T.~Sasse, \emph{Induced matchings in subcubic
  graphs}, SIAM J. Discrete Math. \textbf{28} (2014), 468--473.

\bibitem{KK14}
T.~Kaiser and R.~Kang, \emph{The distance-{$t$} chromatic index of graphs},
  Combin. Probab. Comput. \textbf{23} (2014), 90--101.

\bibitem{KV00}
J.H. Kim and V.H. Vu, \emph{Concentration of multivariate polynomials and its
  applications}, Combinatorica \textbf{20} (2000), 417--434.

\bibitem{Kut02}
S.~Kutin, \emph{Extensions to {McDiarmid's} inequality when differences are
  bounded with high probability}, Tech. Report TR-2002-04, University of
  Chicago, 2002.

\bibitem{Mah00}
M.~Mahdian, \emph{The strong chromatic index of {$C_4$}-free graphs}, Random
  Structures Algorithms \textbf{17} (2000), 357--375.

\bibitem{McD89}
C.~McDiarmid, \emph{On the method of bounded differences}, Surveys in
  combinatorics, 1989 ({N}orwich, 1989), London Math. Soc. Lecture Note Ser.,
  vol. 141, Cambridge Univ. Press, Cambridge, 1989, pp.~148--188.

\bibitem{McD02}
\bysame, \emph{Concentration for independent permutations}, Combin. Probab.
  Comput. \textbf{11} (2002), 163--178.

\bibitem{MR97}
M.~Molloy and B.~Reed, \emph{A bound on the strong chromatic index of a graph},
  J. Combin. Theory Ser. B \textbf{69} (1997), 103--109.

\bibitem{MR02}
\bysame, \emph{Graph coloring and the probabilistic method}, Springer, 2002.

\bibitem{Riv02}
I.~Rivin, \emph{Counting cycles and finite dimensional {$L^p$} norms}, Adv.\
  Appl.\ Math. \textbf{29} (2002), 647--662.

\bibitem{Tal95}
M.~Talagrand, \emph{Concentration of measure and isoperimetric inequalities in
  product spaces}, Inst. Hautes \'Etudes Sci. Publ. Math. (1995), no.~81,
  73--205.

\bibitem{vLi99}
J.H. van Lint, \emph{Introduction to coding theory}, Graduate Texts in
  Mathematics, New York University Press, 1999.

\bibitem{War12}
L.~Warnke, \emph{On the method of typical bounded differences}, to appear in
  Combin. Probab. Comput.

\end{thebibliography}

\vspace{1cm}
\mbox{}

\vfill

\small
\vskip2mm plus 1fill
\noindent
Version \today{}
\bigbreak

\noindent
Henning Bruhn
{\tt <henning.bruhn@uni-ulm.de>}\\
Felix Joos
{\tt <felix.joos@uni-ulm.de>}\\
Institut f\"ur Optimierung und Operations Research\\
Universit\"at Ulm, Ulm\\
Germany\\

\end{document}